%
%
%
%

\documentclass[11pt,reqno,a4paper]{amsart}

\usepackage{amsmath, amssymb, amsthm}
\usepackage{amsfonts}
\usepackage{graphicx}
\makeatother
\usepackage{hyperref}
\usepackage{geometry}
\usepackage{enumerate}
\usepackage{etoolbox}
\usepackage{amscd}
\usepackage{bold-extra}
\usepackage[all,cmtip]{xy}
\geometry{a4paper, margin=1in}

\usepackage[latin1]{inputenc}

\patchcmd{\subsection}{-.5em}{.5em}{}{}
\patchcmd{\subsubsection}{-.5em}{.5em}{}{}

\bibliographystyle{amsplain}
\numberwithin{equation}{section}

\newcommand{\SL}{\operatorname{SL}}

\newcommand{\GL}{\operatorname{GL}}

\newcommand{\Aut}{\operatorname{Aut}}



\newcommand{\cB}{\mathcal{B}}

\newcommand{\cH}{\mathcal{H}}


\newcommand{\bE}{\mathbb{E}}

\newcommand{\bN}{\mathbb{N}}

\newcommand{\bP}{\mathbb{P}}
\newcommand{\bQ}{\mathbb{Q}}
\newcommand{\bR}{\mathbb{R}}

\newcommand{\bT}{\mathbb{T}}

\newcommand{\bZ}{\mathbb{Z}}






\numberwithin{equation}{section}


\newtheorem{thm}{Theorem}[section]

\newtheorem{lem}[thm]{Lemma}

%
%

\newtheorem*{iprob*}{Problem}

\theoremstyle{definition}

\newtheorem{defi}[thm]{Definition}
\newtheorem{rem}[thm]{Remark}

\DeclareMathOperator{\Vol}{Vol}
\DeclareMathOperator{\VS}{VolSpec}
\DeclareMathOperator{\ES}{EhrSpec}
\DeclareMathOperator{\Rat}{Rat}


\title{Ehrhart spectra of large subsets in $\bZ^r$}
\date{}

\author[Michael Bj\"orklund]{Michael Bj\"orklund}
\address{Department of Mathematics, Chalmers and University of Gothenburg, Gothenburg, Sweden}
\email{micbjo@chalmers.se}

\author[Rickard Cullman]{Rickard Cullman}
\address{Department of Mathematics, Chalmers and University of Gothenburg, Gothenburg, Sweden}
\email{cullman@chalmers.se}

\author[Alexander Fish]{Alexander Fish}
\address{School of Mathematics and Statistics, University of Sydney, NSW 2006, Australia}
\email{alexander.fish@sydney.edu.au}

\subjclass[2020]{Primary: 11B30, 22D40. Secondary: 05D10}
\keywords{Ehrhart polynomials, simplices, multiple correlations}

%

\begin{document}
\begin{abstract}
This paper introduces and studies the Ehrhart spectrum of a set $E \subseteq \bZ^r$, defined as the set of all Ehrhart polynomials of simplices with vertices in $E$, generalizing the notion of volume spectrum. We show that for any $E \subseteq\bZ^r$ with positive upper Banach density, there is some $n \in \bZ$ such that the Ehrhart spectrum of $n \bZ ^r$ is contained in the Erhard spectrum of $E$, generalizing an earlier result by the first and third author for the volume spectrum of $E$. 
\end{abstract}
\maketitle
\thispagestyle{empty}
%

\section{Introduction}
Let $\beta:= \{ v_0, ..., v_r \} \subseteq \bZ^r$ be a set of $r+1$ affinely independent points, i.e 
\begin{equation*}
    v_1-v_0, ...v_r - v_0
\end{equation*}
are linearly independent. To such a set, we may associate the \emph{simplex} $\Delta_\beta \subseteq \bR^r$   of all convex combinations of points in $\beta$, that is
\begin{equation*}
    \Delta_\beta:= \left\{ v_0 + \sum_{k=1}^r t_k (v_k - v_0): t_k \in [0,1] \mbox{ for } k=1,..., r \mbox{ and } \sum_{k=1}^r t_k \le 1 \right\}.
\end{equation*}

In \cite{BF}, the first and third authors introduced and studied the \emph{volume spectrum} 
\begin{equation*}
    \VS(E) := \left\{\Vol(\Delta_\beta): \beta \subseteq E \right\} \subseteq \frac{1}{r!} \bZ
\end{equation*}
of a set $E \subseteq\bZ^r$.
 To explain their result, we recall that the \emph{upper Banach density} $d^* (E)$ is defined as
\begin{equation*}
    d^* (E) := \limsup_{n \to \infty} \sup_{v \in \bZ^r} \frac{|E \cap \left( v+ [0,n-1]^r\right)|}{n^r}.
\end{equation*}

\begin{thm}[\cite{BF}, Corollary 1.2] \label{volspec}
    If $E \subseteq \bZ^r$ has positive upper Banach density, there is some $n \in \bN$ such that 
    \begin{equation*}
        n \cdot (\bZ \setminus \{0\} ) \subseteq r! \cdot \VS (E).
    \end{equation*}
\end{thm}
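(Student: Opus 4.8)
The plan is to reduce the statement, via the Furstenberg correspondence principle, to a recurrence problem for a measure preserving $\bZ^r$-action, and then to exploit the multilinearity of the determinant in order to freeze all but one of the edge vectors of the simplex.

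First, by Furstenberg's correspondence principle one obtains a probability measure preserving $\bZ^r$-system $(X,\mu,(T^v)_{v\in\bZ^r})$ and a set $A\subseteq X$ with $\mu(A)=d^*(E)>0$ such that, for all $w_1,\dots,w_k\in\bZ^r$, the inequality $\mu(A\cap T^{w_1}A\cap\cdots\cap T^{w_k}A)>0$ implies $E\cap(E-w_1)\cap\cdots\cap(E-w_k)\ne\emptyset$. Hence it is enough to produce $n\in\bN$ such that for every $m\ne 0$ there are linearly independent $w_1,\dots,w_r\in\bZ^r$ with $|\det(w_1,\dots,w_r)|=n|m|$ and $\mu(A\cap T^{w_1}A\cap\cdots\cap T^{w_r}A)>0$; the resulting simplex $\{v_0,v_0+w_1,\dots,v_0+w_r\}\subseteq E$ then witnesses $n|m|\in r!\cdot\VS(E)$.

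Next I would freeze $r-1$ of the $r$ edge vectors. Choose vectors $u_1,\dots,u_{r-1}\in\bZ^r$, to be specified, such that $\mu(B)>0$, where $B:=A\cap T^{u_1}A\cap\cdots\cap T^{u_{r-1}}A$; such tuples exist by the Furstenberg--Katznelson multiple recurrence theorem, and the set of admissible tuples is syndetic. Set $\phi(w):=\det(u_1,\dots,u_{r-1},w)$, a linear form on $\bZ^r$ whose image is $n_0\bZ$ (with $n_0$ the gcd of the $(r-1)\times(r-1)$ minors of the matrix with columns $u_1,\dots,u_{r-1}$) and whose kernel $M$ is a lattice of rank $r-1$ containing $u_1,\dots,u_{r-1}$. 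For any multiple $n$ of $n_0$ and any $m\ne 0$, the fibre $\phi^{-1}(nm)$ is a nonempty coset $w^*_m+M$, and $\phi(w)\ne 0$ guarantees linear independence of $u_1,\dots,u_{r-1},w$. Since $A\cap T^{u_1}A\cap\cdots\cap T^{u_{r-1}}A\cap T^wA\supseteq B\cap T^wB$, it suffices to show that for each $m$ there is some $w\in w^*_m+M$ with $\mu(B\cap T^wB)>0$. Writing $w=w^*_m+\mu$ with $\mu\in M$ and averaging the correlation $\mu(B\cap T^{w^*_m+\mu}B)=\langle 1_B,\,T^{w^*_m}T^\mu 1_B\rangle$ over $\mu\in M\cap[-N,N]^r$, the mean ergodic theorem for the $M$-action yields, as $N\to\infty$, the limit $\langle 1_B,\,T^{w^*_m}\,\mathbb{E}[1_B\mid\cI_M]\rangle$, where $\cI_M$ is the $\sigma$-algebra of $M$-invariant sets. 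If $M$ acts ergodically this equals $\mu(B)^2>0$, so $\mu(B\cap T^wB)>0$ for a positive proportion of $w\in w^*_m+M$, and we are done with $n=n_0$.

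The main obstacle is the last requirement: one must choose the frozen edges $u_1,\dots,u_{r-1}$ so that $\mu(B)>0$ \emph{and} the subgroup $M$ acts ergodically. If $M$ is not ergodic, $\mathbb{E}[1_B\mid\cI_M]$ need not be constant, the coset average degenerates to a single correlation on the $\bZ^r/M\cong\bZ$-system $\cI_M$, and Poincar\'e recurrence supplies only a \emph{syndetic} set of admissible $m$ rather than all of them. I expect the heart of the argument to be the following: after replacing $E$ by its intersection with a coset of a suitable finite-index subgroup $H\le\bZ^r$, re-running the construction inside $H\cong\bZ^r$ and multiplying $n_0$ by $[\bZ^r:H]$ together with a lattice index --- which is precisely where the constant $n>1$ originates --- one arranges that the syndetic set of admissible tuples from the second step meets the full-density set of tuples for which $M$ acts ergodically, while separately handling the unavoidable finite factor of $(X,\mu)$, on which $M$ acts through a finite group, by forcing the residue of $w^*_m$ modulo $H$ into the relevant difference set. (For $r=1$ there are no frozen edges and the statement genuinely fails, e.g.\ when $E$ is a Bohr set, so the hypothesis $r\ge 2$ and the multilinearity of the determinant are used in an essential way.)
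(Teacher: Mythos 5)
Your approach is genuinely different from the paper's. The paper obtains Theorem \ref{volspec} as a consequence of the stronger Theorem \ref{maincombinatorial}, whose dynamical form (Theorem \ref{maindynamical}) is proved via spectral measures: $\mu(B\cap v.B)$ is written as $\int_{\widehat\Lambda}\langle\xi,v\rangle\,d\sigma_B(\xi)$, the measure is split into an atom at $0$, a rational part, and a non-rational part $\tau$; the non-rational part is killed by averaging along an $\SL_r(\bZ)$-random-walk orbit of a single vector $v_0$, using the Bourgain--Furman--Lindenstrauss--Mozes equidistribution theorem (Theorem \ref{BLFM}) together with the fact that stationary measures on $\bP(\bR^r)$ charge no hyperplane (Lemma \ref{planezero}); the rational part is removed by passing to an ergodic component of a finite-index subgroup action (Lemma \ref{ergodiccomponent}). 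You instead freeze $r-1$ edges and average along a fixed rank-$(r-1)$ sublattice $M=\ker\phi$ via the mean ergodic theorem, exploiting multilinearity of the determinant.

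The gap, which you flag but do not close, is the ergodicity of the $M$-action, and the proposed fix via finite-index subgroups does not suffice. Your argument needs $\bE[1_B\mid\cI_M]$ to be constant, equivalently that no nontrivial character in the Kronecker spectrum vanishes on $M$. Passing to a finite-index subgroup only removes the \emph{rational} (finite-order) part of the spectrum; afterwards $\tau$ may still charge irrational characters $\xi$ whose kernels in $\Lambda$ are rank-$(r-1)$ sublattices, and if $M\subseteq\ker\xi$ the $M$-average fails to wash out the $\xi$-component. Worse, Furstenberg--Katznelson only guarantees that the admissible tuples $(u_1,\dots,u_{r-1})$ with $\mu(B)>0$ form a syndetic set, and a syndetic set can sit entirely inside a proper (even finite-index) subgroup; there is no a priori reason it must meet the set of tuples for which $M\not\subseteq\ker\xi$ simultaneously for every $\xi$ in the support of $\tau$, which is a countable intersection of density-zero or index-$\geq 2$ complements. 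This is exactly the obstruction the paper's haystack and random-walk machinery is designed to circumvent: averaging along a random $\SL_r(\bZ)$-orbit of $v_0$ rather than along a fixed hyperplane guarantees, via Lemma \ref{planezero}, that no hyperplane (hence no $\ker\xi$) captures a positive proportion of the orbit. So the central step of your argument is incomplete as written.

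A secondary remark: even if completed, the freezing argument only recovers the volume spectrum. Sliding $w$ in a coset of $M$ controls $\det(u_1,\dots,u_{r-1},w)$ linearly, but the lower Ehrhart coefficients are not linear (or even affine) functions of $w$, so this route does not extend to Theorem \ref{ehrharttheorem}; realizing the whole target simplex as a translate of an $\SL_r(\bZ)$-image of a dilated model simplex, as in Theorem \ref{maincombinatorial}, is needed for that. Your observation that the statement fails for $r=1$ is correct and consistent with the paper, since the strong irreducibility and proximality of $\SL_r(\bZ)$ used in Remark \ref{proximalelement} are vacuous for $r=1$.
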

In addition, the third author and Sean Skinner proved in \cite[Theorem B]{SF} that $n$ may be chosen to depend only on $d^* (E)$. Crucial to this result is the fact that the volume, as a function on simplices in $\bZ^r$, is invariant under the action of $\SL_r (\bZ)$ on $\bZ^r$. This leads naturally to the question of whether similar results hold for other $\SL_r (\bZ)$-invariant valuations on the set of simplices. \\

As we explain below, essentially all information about such valuations is encoded in the \emph{Ehrhart polynomial} $P_\Delta$ associated to a simplex $\Delta$ with all the vertices being on the integer lattice $\bZ^r$, defined as
\begin{equation*}
    P_\Delta (t) := |t \cdot \Delta \cap \bZ^r|, \quad \textrm{for $t \in \bN$}.
\end{equation*}
Ehrhart proved in \cite{Ehrhart} that $P_\Delta$ indeed takes the form of a polynomial 
\begin{equation*}
    L(\Delta,  r) t^r + L(\Delta, r-1) t^{r-1} + ... + L(\Delta, 1) t + L(\Delta, 0) \quad \textrm{for all $t \in \bN$},
\end{equation*}
and this polynomial only depends on the $\SL_r (\bZ)$-orbit of the simplex $\Delta$. Indeed, for any $\gamma \in SL_r(\bZ)$, we have 
\[
P_{\gamma(\Delta)}(t) =  |t \cdot \gamma(\Delta) \cap \bZ^r| = |t \cdot \Delta \cap \gamma^{-1}(\bZ^r)| =  |t \cdot \Delta \cap \bZ^r| =  P_\Delta (t). 
\]
Furthermore, it does not change if $\Delta$ is shifted by a vector in $\bZ^r$. Thus the coefficients
\begin{equation*}
    \Delta \mapsto L(\Delta, k)
\end{equation*} 
for $k=0, ..., r$ are both $\SL_r (\bZ)$- and translation-invariant functions; for example, 
the leading coefficient $L( \Delta, r)$ is, up to a fixed multiplicative constant, the volume of $\Delta$. Furthermore, it was shown in \cite[Korollar 3]{KneserBetke} that any $\SL_r (\bZ)$-invariant valuation on the set of simplices is a linear combination of Ehrhart coefficients $L( \cdot, k)$ for $k = 0, ..., r$. \\

In light of this, we introduce the \emph{Ehrhart spectrum} of a set $E \subseteq \bZ^r$ as a generalization of the volume spectrum. 

\begin{defi}[Ehrhart spectrum]
The \emph{Ehrhart spectrum} $\ES(E) \subset \bQ[t]$ of a set $E \subset \bZ^r$
is defined as the collection of all Ehrhart polynomials for simplices with vertices in $E$. 
\end{defi}

The primary goal of this paper is to extend Theorem \ref{volspec} by incorporating the full Ehrhart spectrum.

\begin{thm}\label{ehrharttheorem}
    Let $E \subseteq \bZ^r$ be a set with positive upper Banach density. Then there is some $n \in \bN$ such that 
    \begin{equation*}
  \ES (n \bZ^r) \subseteq \ES (E). 
    \end{equation*}
\end{thm}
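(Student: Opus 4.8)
The plan is to reduce the Ehrhart statement to the volume statement (Theorem~\ref{volspec}) by understanding exactly which simplices in $n\bZ^r$ can arise up to $\SL_r(\bZ)$-equivalence, and then realizing each such equivalence class inside $E$. The key observation is that $\ES(n\bZ^r)$ is determined by the $\SL_r(\bZ)$-orbits (together with translations) of simplices with vertices in $n\bZ^r$; equivalently, by the $\SL_r(\bZ)$-orbits of $(r+1)$-tuples of affinely independent points in $n\bZ^r$, which by translating one vertex to the origin is the same as the $\SL_r(\bZ)$-orbits of the lattices (or rather, the sublattices-with-basis) spanned by $v_1-v_0,\dots,v_r-v_0$ inside $n\bZ^r$. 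So the first step is: given a target Ehrhart polynomial $P \in \ES(n\bZ^r)$, fix a witnessing simplex $\Delta_\beta$ with $\beta \subseteq n\bZ^r$, and record the integer matrix $M_\beta$ whose columns are $v_k-v_0$. The goal becomes: find $\beta' \subseteq E$ with $M_{\beta'}$ in the same $\SL_r(\bZ)$-orbit (acting on the left, and allowing column permutations/relabelling of vertices if convenient) as $M_\beta$, since then $P_{\Delta_{\beta'}} = P_{\Delta_\beta} = P$.

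**Next I would** invoke the machinery behind Theorem~\ref{volspec}. That theorem already produces, for some fixed $n$, every volume in $n\cdot(\bZ\setminus\{0\})/r!$ as the volume of a simplex in $E$; but in fact the proof method (a multiple-recurrence / density argument in $\bZ^r$, finding configurations $\{v_0,\dots,v_r\}\subseteq E$ with prescribed difference structure) should give more: one should be able to prescribe not just $\det(M_\beta)$ but the full pattern of differences $v_k - v_0$ up to the $\SL_r(\bZ)$-action. Concretely, the idea is that for a suitable $n$, every pattern $\{0, w_1,\dots,w_r\}$ with $w_k \in n\bZ^r$ (or a finite $\SL_r(\bZ)$-orbit representative thereof) embeds, after applying a single $\SL_r(\bZ)$-transformation, as a translate sitting inside $E$. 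The cleanest route: by a result in the spirit of \cite{BF}/\cite{SF} on configurations in positive-density sets, for each fixed finite $F \subseteq \bZ^r$ there is $n$ such that some $\SL_r(\bZ)$-image of $nF$ is a subset of a translate of $E$; one then needs to make $n$ uniform over the (infinitely many) orbits that occur in $\ES(n\bZ^r)$, which is where the real work lies.

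**The main obstacle** is precisely this uniformity: $\ES(n\bZ^r)$ involves infinitely many $\SL_r(\bZ)$-orbits of simplices (of unbounded volume), whereas a naive application of a Szemer\'edi-type configuration theorem handles one bounded pattern at a time with an $n$ depending on the pattern. To overcome this I would exploit the scaling structure: note $\ES(n\bZ^r)$ and $\ES(\bZ^r)$ are related by the substitution coming from dilation, and a simplex with vertices in $n\bZ^r$ is $n$ times a simplex with vertices in $\bZ^r$, so its Ehrhart polynomial is $P_{\Delta}(nt)$-type data; thus it suffices to realize, inside $E$, an $\SL_r(\bZ)$-translate of $n$ times each primitive pattern. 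The hope is that the density argument, applied once at the correct scale, simultaneously captures all dilates — i.e. that finding one copy of $n\cdot\{0,w_1,\dots,w_r\}$ in $E$ for the generating patterns suffices, because larger simplices decompose. If that fails, the fallback is a compactness/limiting argument: pass to a Furstenberg-type measure-preserving $\bZ^r$-system associated to $E$, where positive density gives a positive-measure set, and show that this system contains, up to $\SL_r(\bZ)$-conjugacy, a copy of the natural $\bZ^r$-action on $n\bZ^r$ for suitable $n$; the multiple-recurrence theorem then yields all the required simplices at once. I expect the bulk of the argument to be checking that the correspondence principle plus the known proof of Theorem~\ref{volspec} upgrades from "all volumes" to "all Ehrhart polynomials," which amounts to showing the difference-pattern (not just its determinant) is controllable, and fixing $n$ uniformly via the dilation reduction.
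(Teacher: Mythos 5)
Your reduction is correct and matches the paper's structure: it suffices to find one $n$, uniform over all inputs, such that any prescribed linearly independent $v_1,\dots,v_r \in \bZ^r$ can be realized, after applying a single $\gamma \in \SL_r(\bZ)$ and a translation $v_0 \in E$, as $v_0 + n\gamma(v_k) \in E$; this is exactly Theorem~\ref{maincombinatorial}, and the deduction of Theorem~\ref{ehrharttheorem} from it is the short argument you sketch (using translation- and $\SL_r(\bZ)$-invariance of Ehrhart polynomials). Passing to a dynamical multi-correlation statement via Furstenberg correspondence, as in Theorem~\ref{maindynamical}, is also the correct next step.

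However, your proposal contains a step that would fail, and leaves the genuine difficulty unaddressed. The suggestion that ``larger simplices decompose,'' so that realizing finitely many generating patterns at scale $n$ would bootstrap to all of $\ES(n\bZ^r)$, does not work: Ehrhart polynomials are not additive under simplex decomposition in any useful way here, and $\ES(n\bZ^r)$ involves infinitely many $\SL_r(\bZ)$-orbits not generated from a finite list. Your fallback names Furstenberg correspondence and multiple recurrence but offers no mechanism for producing a uniform $n$, which is precisely the crux. In the paper, $n$ comes from spectral analysis of the ergodic $\bZ^r$-system: Lemma~\ref{ergodiccomponent} (from \cite{BF}) supplies a finite-index sublattice $\Lambda(n) = n!\Lambda$ along which the rational part of the spectral measure $\sigma_{\nu,B}$ is small, and Lemma~\ref{haystack} --- the heart of the paper, proved by a random-walk equidistribution argument (Theorem~\ref{BLFM}, Lemma~\ref{planezero}) --- then shows that for this fixed $n$ the set $\{v : \mu(B \cap v.B) > c\} \cap \Gamma.v_0$ contains a haystack. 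Achieving the multi-correlation with a \emph{single} $\gamma \in \SL_r(\bZ)$ across all $r$ terms also requires a further correction by shears $S^\beta_l$ inside $\SL_r(\bZ)$ (Definition~\ref{Sdef}), together with the fact that the haystack vector expands $B$ to nearly full measure under its integer span (Lemma~\ref{haystackexpansion}). None of this is reachable from the dilation-reduction heuristic, so as written the proposal does not close.
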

Since the Ehrhart polynomial of a simplex is invariant under both translation in $\bZ^r$ and the action of $\SL_r (\bZ)$, Theorem \ref{ehrharttheorem} follows directly from the following result. 

\begin{thm} \label{maincombinatorial}
    Let $E \subseteq \bZ^r$ be a set with positive upper Banach density. There exists an $n \in \bN$ such that for any set $v_1, ..., v_r \in \bZ^r$ of linearly independent vectors, there is $v_0 \in E$ and $\gamma \in \mbox{SL}_r (\bZ)$ such that
    \begin{equation*}
        v_0 + n \gamma (v_k) \in E
    \end{equation*}
    for $k = 1,..., r$.
\end{thm}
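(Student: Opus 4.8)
The plan is to deduce Theorem~\ref{maincombinatorial} from a multiple-recurrence statement in ergodic theory, following the Furstenberg correspondence strategy that underlies Theorem~\ref{volspec}. First I would pass from the set $E$ of positive upper Banach density to a measure-preserving system: there is a standard construction producing a probability space $(X,\mu)$ with a measure-preserving action of $\bZ^r$ (by commuting transformations $T_1,\dots,T_r$) and a set $A \subseteq X$ with $\mu(A) = d^*(E) > 0$, such that combinatorial configurations occurring in $E$ with positive density correspond to positive-measure returns of $A$ under the $\bZ^r$-action. The twist here, compared with the purely additive setting, is that we also have the $\SL_r(\bZ)$-action available: $\SL_r(\bZ)$ acts on $\bZ^r$ and hence permutes the generators of our acting group, so the relevant object is really an action of the semidirect product $\bZ^r \rtimes \SL_r(\bZ)$ (or at least we may average over $\gamma \in \SL_r(\bZ)$).

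The heart of the matter is to find a single $n \in \bN$, depending only on the system (equivalently, via \cite{SF}-type arguments, only on $d^*(E)$), such that for \emph{every} tuple of linearly independent $v_1,\dots,v_r \in \bZ^r$ there exists $\gamma \in \SL_r(\bZ)$ with
\begin{equation*}
    \mu\!\left( A \cap T^{-n\gamma(v_1)} A \cap \cdots \cap T^{-n\gamma(v_r)} A \right) > 0,
\end{equation*}
where $T^{w}$ denotes the transformation corresponding to $w \in \bZ^r$. The key simplification is that, given $v_1,\dots,v_r$ linearly independent, the set $\{\gamma(v_1),\dots,\gamma(v_r) : \gamma \in \SL_r(\bZ)\}$ is very rich; in particular, by choosing $\gamma$ appropriately we may arrange the $\gamma(v_k)$ to be a basis of a fixed finite-index sublattice, or more usefully, to lie along the coordinate directions up to scaling. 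Concretely, I expect one reduces to the case where the configuration $\{n\gamma(v_k)\}$ can be taken inside $n'\bZ^r$ for a controlled $n'$, at which point the problem becomes: show $\mu(A \cap T^{-n' e_1 \cdot m_1} A \cap \cdots) > 0$ for suitable multiples, which is a multidimensional Szemer\'edi-type / IP-recurrence statement. Here one invokes the Furstenberg--Katznelson multidimensional Szemer\'edi theorem (or its IP/polynomial refinements) to guarantee that a positive-measure intersection occurs along some common dilate $n$, uniformly.

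The main obstacle will be the \emph{uniformity of $n$ over all linearly independent tuples} $v_1,\dots,v_r$ simultaneously. For a single tuple, multiple recurrence gives some $n$, but a priori $n$ depends on the tuple; the point of bringing in $\SL_r(\bZ)$ is precisely to collapse the infinitely many ``shapes'' of tuples into finitely many $\SL_r(\bZ)$-orbits of \emph{directions} — but there are still infinitely many orbits (indexed, roughly, by elementary divisors / the lattice the $v_k$ span). I would handle this by first reducing, via $\SL_r(\bZ)$, to tuples spanning a sublattice of the form $\operatorname{diag}(d_1,\dots,d_r)\bZ^r$ in Smith normal form, then observing that dilating by the $d_k$'s is absorbed into the dilation parameter, so that a recurrence result uniform over dilates of a \emph{fixed} configuration (the coordinate basis) suffices — and such uniformity is exactly what the density/IP-recurrence machinery of \cite{BF} and \cite{SF} provides. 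Assembling these pieces — correspondence principle, $\SL_r(\bZ)$-reduction to Smith normal form, and uniform multidimensional recurrence — and then translating the positive-measure intersection back into the existence of $v_0 \in E$ and $\gamma \in \SL_r(\bZ)$ with $v_0 + n\gamma(v_k) \in E$, completes the proof.
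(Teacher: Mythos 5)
The correspondence-principle reduction you sketch at the start is exactly what the paper does: Theorem~\ref{maincombinatorial} is reduced to the dynamical Theorem~\ref{maindynamical} on a measure-preserving $\bZ^r$-system. After that point, however, your route diverges from the paper's, and it contains two genuine gaps.

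First, the ``reduction to Smith normal form'' is not available. Applying $\gamma\in\SL_r(\bZ)$ to each $v_k$ amounts to multiplying the matrix $V=[v_1|\cdots|v_r]$ on one side only; this gives at best Hermite (upper triangular) normal form, whereas Smith normal form requires multiplication by $\GL_r(\bZ)$ on \emph{both} sides, a freedom we do not have here. Concretely, for $r=2$ and $V=\begin{pmatrix}2&1\\0&2\end{pmatrix}$, no $\gamma\in\GL_2(\bZ)$ makes $\gamma V$ diagonal: $\gamma V$ diagonal forces $c=0$ and $a=-2b$, whence $\det\gamma=ad=-2bd\ne\pm1$. So one cannot assume the configuration lies along coordinate axes.

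Second, even granting a diagonal reduction $v_k = d_k e_k$, the uniformity you ask for does not follow from Furstenberg--Katznelson. What is needed is a \emph{single} $n$ such that $\mu(A\cap T^{-nd_1 e_1}A\cap\cdots\cap T^{-nd_r e_r}A)>0$ for \emph{all} tuples $(d_1,\dots,d_r)$; you cannot ``absorb the $d_k$'s into the dilation parameter'' because the $d_k$'s are different in different coordinates and vary over the claim. Multidimensional multiple recurrence gives, for each fixed tuple, a positive-density set of good $n$'s, but these sets need not have a common element over all tuples. This is precisely the difficulty the paper is designed to overcome, and the mechanism it uses is quite different from yours: passing to a sublattice $\Lambda(n)=n!\Lambda$ where the rational part of the spectral measure of $B$ is small (Lemma~\ref{ergodiccomponent}); showing, via equidistribution of $\SL_r(\bZ)$-random-walk convolutions on the dual torus $\widehat{\Lambda}$ (Theorem~\ref{BLFM} and Lemma~\ref{planezero}), that the set $\{v:\mu(B\cap v.B)>c\}\cap\Gamma.v_0$ contains a haystack (Lemma~\ref{haystack}); invoking the directional expansion Lemma~\ref{haystackexpansion} to find a direction $u_1\in\Gamma.(n!v_1)$ with $\mu(\bZ u_1.B)$ close to $1$; and finally absorbing the integer corrections $m_k u_1$ into the group element via shear transformations $S^\beta_j$ (Definition~\ref{Sdef}) so that the whole configuration is $\{n!\,\gamma(v_k)\}$ for a single $\gamma\in\SL_r(\bZ)$. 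Nothing in your proposal supplies a substitute for the spectral/random-walk step that produces the uniform $n$, which is the crux of the theorem.
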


\begin{rem}
Theorem \ref{maincombinatorial} implies Theorem \ref{volspec} by considering the leading coefficient of the Ehrhart polynomials on both sides of the inclusion.
\end{rem}

\begin{proof}[Proof of Theorem \ref{ehrharttheorem} using Theorem \ref{maincombinatorial}]
    Let $\Delta_\beta$ be any simplex in $\bZ^r$ with vertex set $\beta = \{u_0, ..., u_r \}$ and Ehrhart polynomial $P_{\Delta_\beta}$. By translation invariance, we may assume $u_0 = 0$. By Theorem \ref{maincombinatorial} we can find $n \in \bN,$ $\gamma \in \SL_r (\bZ)$ and $v_0 \in E$ such that 
    \begin{equation*}
        v_0 + \gamma (n \beta) \subseteq E, 
    \end{equation*}
    so 
    \begin{equation*}
        \ES (E)  \ni P_{\Delta_{v_0 + \gamma (n\beta)}} = P_{\Delta_ {n \cdot \beta}} = P_{n \Delta_\beta}.
    \end{equation*}
    As $\Delta_\beta$ was an arbitrary simplex, we are done. 
\end{proof}

Theorem \ref{maincombinatorial} can be equivalently formulated as
\begin{equation*}
    E \cap (E - n \gamma(v_1)) \cap ... \cap (E - n \gamma (v_r)) \neq 0
\end{equation*}
for some $n$ and $\gamma$, which in turn is, via the Furstenberg correspondence principle, a straightforward consequence of the following dynamical theorem, to whose proof this paper is devoted. 

\begin{thm}\label{maindynamical}
    Let $(X, \mu)$ be an ergodic probability measure preserving $\bZ^r$-space and $B \in \cB_X$ a set of positive measure. There exists an $n \in \bN$ such that for any set $v_1, ..., v_r \in n \bZ^r$ of linearly independent vectors, there is $\gamma \in \SL_r (\bZ)$ such that
    \begin{equation*}
        \mu (B \cap \gamma(v_1).B \cap ...\cap  \gamma(v_r).B) > 0. 
    \end{equation*}
\end{thm}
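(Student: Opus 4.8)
Let me think about how to prove this dynamical statement. We have an ergodic pmp $\bZ^r$-system $(X,\mu)$, a set $B$ of positive measure, and we want: for some $n$, any $r$ linearly independent vectors in $n\bZ^r$ can be "$\SL_r(\bZ)$-rotated" so that the multiple intersection of translates of $B$ has positive measure. The key structural feature is the $\SL_r(\bZ)$ action on the vectors, which gives us enormous freedom.

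The natural first move: since $v_1,\dots,v_r \in n\bZ^r$ are linearly independent, they span a sublattice $\Lambda = \bZ v_1 + \cdots + \bZ v_r$ of finite index in $\bZ^r$. By the Smith normal form / elementary divisors theorem applied to the matrix with columns $v_i$, there exist $\gamma, \delta \in \SL_r(\bZ)$ (actually $\gamma$ suffices on one side if we only need a basis of $\Lambda$) such that $\gamma$ carries the $v_i$ to vectors of the form $d_1 e_1, d_2 e_2, \dots, d_r e_r$ — wait, that requires operations on both sides. What we actually get from column operations alone (right multiplication by $\SL_r(\bZ)$, which reshuffles which integer combinations we take — but we're not allowed to replace $v_i$ by combinations, we're applying $\gamma$ on the left). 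So: left multiplication by $\gamma \in \SL_r(\bZ)$ can bring the matrix $[v_1|\cdots|v_r]$ to Hermite normal form (upper triangular). Combined with the observation that any $n\bZ^r \ni v_i$, we should reduce to understanding the case where the $v_i$ are "nice", e.g. scalar multiples of standard basis vectors, at the cost of also allowing $n$ to absorb the elementary divisors. The plan is therefore: find $n$ (depending only on $(X,\mu,B)$) such that for \emph{every} $m \ge n$ (or every $m$ in a suitable set, e.g. divisible by $n$), $\mu(B \cap (m e_1).B \cap \cdots \cap (m e_r).B) > 0$, and then argue that the general linearly-independent tuple in $n\bZ^r$ can be rotated by $\SL_r(\bZ)$ into such a diagonal configuration.

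For the diagonal estimate $\mu\big(\bigcap_{k=1}^r (m_k e_k).B\big) > 0$ for all sufficiently large $m_k$: this is where the measure-theoretic content lives, and I expect it to be the main obstacle. The idea is an iterated application of a Khintchine-type / syndeticity argument combined with a compactness-vs-weak-mixing (Furstenberg structure theory) dichotomy applied coordinate by coordinate. Concretely, consider the $\bZ$-action generated by $e_1$; the set $\{m : \mu(B \cap (m e_1).B) > \mu(B)^2 - \epsilon\}$ is syndetic (Khintchine recurrence), and in fact one wants a \emph{set of positive lower density of $m$} for which a strengthened statement holds that survives intersecting with the conditions coming from $e_2,\dots,e_r$. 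One clean route: pass to the Kronecker/nilfactor or use the decomposition of $L^2(\mu)$ into the $\bZ^r$-compact part plus its complement; on the compact part the orbit closure is a compact abelian group and the relevant multiple intersection has positive measure along a positive-density set of $(m_1,\dots,m_r)$ by equidistribution, while the complement contributes error terms that are small on average. Averaging $\frac{1}{N^r}\sum_{m_1,\dots,m_r \le N} \mu\big(B \cap \bigcap_k (m_k e_k).B\big)$ and showing the $\liminf$ is positive (bounded below by an expression like $\int (\bE[1_B \mid \mathrm{Kro}])^{r+1} d\mu > 0$) gives a positive-density set of good tuples; then the flexibility of $\SL_r(\bZ)$ is used to hit one.

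The final assembly: given arbitrary linearly independent $v_1,\dots,v_r \in n\bZ^r$, I want $\gamma \in \SL_r(\bZ)$ with $\gamma(v_i)$ a "good" tuple. Here I would use that $\SL_r(\bZ)$ acts transitively on primitive vectors and, more relevantly, on unimodular frames, so I can try to steer $(\gamma v_1, \dots, \gamma v_r)$ toward the diagonal shape $(d_1 e_1, \dots, d_r e_r)$ via Hermite/Smith reduction — but since only left multiplication is allowed, I instead reduce the \emph{associated sublattice*: $\Lambda$ has an $\SL_r(\bZ)$-image of the form $d_1\bZ \times \cdots \times d_r\bZ$ with all $d_i \ge n$ (because $\Lambda \subseteq n\bZ^r$ forces $n \mid d_i$), and then I relate $\mu$-positivity for the original frame to that for a generating diagonal frame of $\gamma\Lambda$ — the point being that the $v_i$ and a diagonal basis of $\gamma \Lambda$ generate the same group, hence (by a further, genuinely $\SL_r$-equivariant change of basis of that lattice, realized inside the ambient $\SL_r(\bZ)$) I can reduce to the diagonal case, which is handled above since each $d_i \ge n$. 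The subtle point to get right is that changing basis \emph{within} the sublattice $\Lambda$ is not an ambient $\SL_r(\bZ)$ operation on the $v_i$ unless that basis change extends — which it always does, since $\Lambda$ and any basis change of it sit inside $\bZ^r$ and $\SL_r(\bZ) \to \SL(\Lambda)$ is... no: one must instead directly show that the \emph{value} $\mu(B \cap \bigcap \gamma(v_i).B)$ only depends on the subgroup $\gamma(\Lambda)$ up to the diagonal normal form, which follows from $\SL_r(\bZ)$-invariance of the whole configuration plus ergodicity restricted to $\Lambda$-orbits. I expect the careful bookkeeping here, and the positivity of the average in the diagonal case, to be the two genuine difficulties.
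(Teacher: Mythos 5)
Your approach is genuinely different from the paper's, but it runs into several real obstructions. Let me identify them.

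\textbf{The reduction to diagonal form does not work.} You correctly observe that left-multiplication by a single $\gamma\in\SL_r(\bZ)$ can bring the \emph{lattice} $\Lambda=\bZ v_1+\cdots+\bZ v_r$ to diagonal form $d_1\bZ\oplus\cdots\oplus d_r\bZ$ (via Smith normal form $UAV=\operatorname{diag}(d_i)$, taking $\gamma=U$). However, the image \emph{frame} $\gamma(v_1),\ldots,\gamma(v_r)=UAe_1,\ldots,UAe_r=\operatorname{diag}(d_i)V^{-1}e_1,\ldots$ is not the diagonal frame $(d_1e_1,\ldots,d_re_r)$ unless $V=I$, i.e. unless the original tuple was already essentially diagonal. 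You acknowledge this, but your proposed repair --- that $\mu\bigl(B\cap\bigcap_k\gamma(v_k).B\bigr)$ ``only depends on the subgroup $\gamma(\Lambda)$ up to normal form, by $\SL_r(\bZ)$-invariance plus ergodicity'' --- is false. The multiple intersection genuinely depends on the frame, not just on the lattice it generates: e.g.\ for a $\bZ^2$-system, $B\cap e_1.B\cap(e_1+Me_2).B$ and $B\cap e_1.B\cap e_2.B$ both involve frames generating $\bZ^2$, yet there is no dynamical reason their measures should coincide; the quantity $\mu(B\cap u.B\cap w.B)$ is a function of the pair $(u,w)$, and two different bases of the same sublattice are not conjugate by any ambient $\SL_r(\bZ)$ element fixing the first vector. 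There is no invariance principle that lets you trade one basis of $\Lambda$ for another without changing the intersection.

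\textbf{The diagonal averaging step is under-specified and, as stated, insufficient.} Your proposed Khintchine-type lower bound
\[
\liminf_N \frac{1}{N^r}\sum_{m_1,\ldots,m_r\le N}\mu\Bigl(B\cap\bigcap_k(m_ke_k).B\Bigr)\;>\;0
\]
is plausible (the inner average over $m_k$ converges to $\mathbb{E}[1_B\,|\,\mathcal{I}_k]$ by the mean ergodic theorem, so the limit is $\int 1_B\prod_k\mathbb{E}[1_B\,|\,\mathcal{I}_k]\,d\mu$), but that integral need not be positive for an arbitrary ergodic $\bZ^r$-system: the individual generators $T_k$ can fail to be ergodic, and the conditional expectations can vanish off a null piece of $B$. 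The paper confronts exactly this difficulty by passing to a carefully chosen $\Lambda(n)$-ergodic component $\nu$ on which the rational spectral mass is small (their Lemma on ergodic components), which your sketch does not address. Moreover, even granting positivity, you obtain a positive-density set of good tuples $(m_1,\ldots,m_r)$; you would still need to show this set meets the $\SL_r(\bZ)$-orbit of your specific reduced tuple, and this is not discussed.

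\textbf{What the paper does instead.} The paper avoids the frame/lattice tension entirely. It shows (via random walks on $\SL_r(\bZ)$, stationary measures on projective space, and the Bourgain--Furman--Lindenstrauss--Mozes equidistribution theorem) that a single vector $u_1\in\Gamma.v_0$ can be chosen so that $\nu(B\cap u_1.B)$ is bounded below \emph{and simultaneously} $\bZ u_1$ is expansive, $\nu((\bZ u_1.B)^c)<\epsilon$. The remaining $u_2,\ldots,u_r$ are then entirely arbitrary; a union bound gives $\nu\bigl(B\cap u_1.B\cap\bigcap_{k\ge2}(u_k+\bZ u_1).B\bigr)>0$, producing integers $m_k$ with $\nu\bigl(B\cap u_1.B\cap\bigcap_k(u_k+m_ku_1).B\bigr)>0$, and the shifts $m_ku_1$ are absorbed by the unipotent $S=(S_2^\beta)^{m_2}\cdots(S_r^\beta)^{m_r}\in\SL_r(\bZ)$. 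This ``one special vector plus unipotent absorption'' mechanism is the real engine of the theorem, and it has no analogue in your proposal. I would recommend focusing your attention there: the gap in your argument is precisely the need for a single $\gamma$ to simultaneously control $r$ correlations, and the paper's insight is that it suffices to control one and buy the rest cheaply via expansiveness and unipotents.
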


\begin{rem}
We emphasize that this is a genuine multi-correlation result, as the same  
element $\gamma \in \SL_r(\bZ)$ is used throughout the multiple correlation. This sharply contrasts with earlier results \cite{BB, BFChar, BFP, F} by the first and third authors, in collaborations with Bulinski and Parkinson, where ensuring positivity required the use of different $\gamma$'s.
\end{rem}

\medskip

\subsection*{Acknowledgments} 
M.B. was supported by the grant 11253320 from the Swedish Research Council. A.F. would like to thank the support of the Australian Research council via the grant DP240100472. 
A.F. is grateful for the generous hospitality of Chalmers University in Gothenburg during February 2025, when this work was completed.

\section{General setup}
In this paper, we denote by $\Lambda$ a free $\bZ$-module generated by $r$ elements, i.e., $\Lambda \cong \bZ^r$.  For $n \in \bN$, let $\Lambda(n) := n! \cdot \Lambda$
and note that $\Lambda(n)$ is also a free $\bZ$-module with $r$ generators, which has 
finite index in $\Lambda$. We denote by $\Aut_1 (\Lambda)$ the group of all automorphisms of $\Lambda$ with unit determinant. Note that 
$\Aut_1 (\Lambda) \cong \SL_r (\bZ^r)$. We denote by $\widehat{\Lambda}$ the Pontryagin
dual of $\Lambda$, and write 
\[
\langle \cdot,\cdot \rangle : \widehat{\Lambda} \times \Lambda \to \bT, \enskip (\xi,v) \mapsto \langle \xi,v \rangle,
\]
for the canonical pairing. \\

We collect below some concepts and notation to be used in our arguments. 

\begin{defi}[Rational spectrum]
    We define the \emph{rational spectrum} $\Rat\mbox{ } \Lambda \subseteq\widehat{\Lambda}$ as the set of all elements $\xi \in \widehat{\Lambda}$ such that $\ker \xi$ has finite index in $\Lambda$. 
\end{defi}

\begin{defi}[Spectral measure] \label{spectraldef}
    Let $(X, \mu)$ be a measure-preserving $\Lambda$-space, and let $B \in \cB_X$ be a set of positive measure. By Bochner's theorem there is a \emph{spectral measure} $\sigma_{\mu, B}$ on $\widehat{\Lambda}$ such that
    \begin{equation*}
        \mu(B \cap v. B) = \int_{\widehat{\Lambda}} \langle \xi, v\rangle d \sigma_{\mu, B} (\xi). 
    \end{equation*}
\end{defi}

We often suppress the dependence on the measure $\mu$ in $\sigma_{\mu, B}$ and write $\sigma_B$ if no ambiguity arises. The spectral measure $\sigma_B$ can be decomposed as 
\begin{equation} \label{spectraldecomp}
    \mu (B) ^2 \delta_0 + \sigma_B ^{\Rat} + \tau
\end{equation}
where $\sigma_B ^{\Rat}$ is the restriction of $\sigma_B$ to $\mbox{Rat }\Lambda \setminus\{0\}$ and $\tau$ the restriction to $\widehat{\Lambda} \setminus\Rat  \mbox{ } \Lambda$. 

\begin{defi} \label{Sdef}
    Let $\beta := (v_1, ..., v_r)$ be an ordered basis of $\Lambda$. We define the set of $r - 1$ linear transformations $\{S^\beta _l \}_{l=2}^r \subseteq \Aut_1 (\Lambda)$ in the basis $\beta$ by the following action on the basis elements:
    \begin{equation*}
    S^\beta _l (v_k) = v_k + \delta_{kl} v_1. 
    \end{equation*}
\end{defi}

\begin{rem}
If $\beta$ is a basis of a finite index subgroup in $\Lambda$, then the maps defined by Definition \ref{Sdef} can be extended by linearity to all $\Lambda$ and the obtained extended maps will be in $\Aut_1 (\Lambda)$.
\end{rem}

\begin{defi}[Haystack] \label{haystackdef}
    An infinite set $\cH \subseteq \Lambda$ is a \emph{haystack} if any set of $r$ elements in $\cH$ generates a finite-index subgroup of $\Lambda$. Equivalently, any set of $r$ elements in $\cH$ are linearly independent over $\bR$. 
\end{defi}

\begin{rem}
    In their definition of a haystack in \cite{BF}, the first and third authors also require $\cH$ to be a subset of the primitive vectors in $\Lambda$. This, assumption, however, is never used in the proofs of Lemmas \ref{haystackexpansion} and \ref{ergodiccomponent} below, so we omit it from our definition. 
\end{rem}

\begin{defi}
A group $\Gamma < \GL_r (\bZ)$ is called \textit{strongly irreducible} if every finite index subgroup of $\Gamma$ acts irreducibly on $\bR^r$. An element $\gamma \in \Gamma$ is called \textit{proximal} if $\gamma$  has only one eigenvalue of the largest absolute value and the corresponding eigenspace is one-dimensional.
\end{defi}

\begin{rem}\label{proximalelement}
The group $\Gamma = SL_r(\bZ)$ is strongly irreducible and contains proximal elements.
\end{rem}

\section{Proof of Theorem \ref{maindynamical}}

The key part of the argument is the following Lemma; the proof of it comprises the main part of this paper.  

\begin{lem} \label{haystack}
    Let $(X, \mu)$ be an ergodic $\Lambda$-space, let $B \in \cB_X$ be a set of positive measure and let $\sigma_{\mu, B}$ be its spectral measure. Let $\Gamma < \GL_r (\bZ)$ be a finitely generated strongly irreducible subgroup containing a proximal element. Let $0 < c < \mu(B)^2 - \sigma_{\mu, B} (\Rat \mbox{ } \Lambda \setminus\{0\})$. Then for any $v_0 \in \Lambda$ the set
    \begin{equation*}
        \{v \in \Lambda: \mu(B \cap v.B) > c \} \cap \Gamma .v_0
    \end{equation*}
    contains a haystack. 
\end{lem}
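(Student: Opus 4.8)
\smallskip
\noindent\textit{Proof strategy.}\ The plan is to combine a random-walk averaging argument over $\Gamma$ with the spectral decomposition \eqref{spectraldecomp}, and then to assemble the haystack by a greedy argument. We may assume $v_{0}\neq 0$, since otherwise $\Gamma.v_{0}=\{0\}$ contains no infinite haystack and there is nothing to prove. Fix a finitely supported probability measure $\nu$ on $\Gamma$ whose support generates $\Gamma$ and contains the identity, and set $A:=\{v\in\Lambda:\mu(B\cap v.B)>c\}$; the goal is that $A\cap\Gamma.v_{0}$ contains a haystack.

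First I would average $\mu(B\cap\gamma(v_{0}).B)$ against the $n$-step distribution $\nu^{*n}$. Writing $\Phi_{n}(\xi):=\sum_{\gamma\in\Gamma}\nu^{*n}(\gamma)\langle\xi,\gamma(v_{0})\rangle$, so that $|\Phi_{n}|\le 1$ and $\Phi_{n}(0)=1$, Definition~\ref{spectraldef} and the decomposition \eqref{spectraldecomp} (whose atom at $0$ uses ergodicity) give
\[
\sum_{\gamma\in\Gamma}\nu^{*n}(\gamma)\,\mu\!\left(B\cap\gamma(v_{0}).B\right)\;=\;\mu(B)^{2}\;+\;\int \Phi_{n}\,d\sigma_{B}^{\Rat}\;+\;\int \Phi_{n}\,d\tau .
\]
The middle integral has modulus at most $\sigma_{B}(\Rat\,\Lambda\setminus\{0\})$ for every $n$. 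The crucial claim is that the last integral tends to $0$: for each fixed $\xi$, $\Phi_{n}(\xi)$ is, at a nonzero frequency determined by $v_{0}$, a Fourier coefficient of the law of $\xi$ transported by $n$ steps of the random walk induced on the torus $\widehat{\Lambda}\cong\bT^{r}$ by the contragredient action of $\Gamma$ — which, in suitable coordinates, is the standard action of the transpose group $\Gamma^{t}$, again a finitely generated, strongly irreducible group containing a proximal element. By the equidistribution theorem of Bourgain, Furman, Lindenstrauss and Mozes, for every $\xi\notin\Rat\,\Lambda$ this law converges weakly to Haar measure on $\bT^{r}$, whence $\Phi_{n}(\xi)\to 0$ (this is where $v_{0}\neq 0$ enters). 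Since $\tau$ is carried by $\widehat{\Lambda}\setminus\Rat\,\Lambda$ and $|\Phi_{n}|\le 1$, dominated convergence gives $\int\Phi_{n}\,d\tau\to 0$, and therefore
\[
\liminf_{n\to\infty}\ \sum_{\gamma\in\Gamma}\nu^{*n}(\gamma)\,\mu\!\left(B\cap\gamma(v_{0}).B\right)\ \geq\ \mu(B)^{2}-\sigma_{B}(\Rat\,\Lambda\setminus\{0\})\ >\ c .
\]
Fixing $\varepsilon>0$ below $\mu(B)^{2}-\sigma_{B}(\Rat\,\Lambda\setminus\{0\})-c$, the left-hand sum exceeds $c+\varepsilon$ for all large $n$; since each summand lies in $[0,\mu(B)]$, an elementary Markov-type inequality produces a constant $\delta>0$ with $\nu^{*n}\!\left(\{\gamma\in\Gamma:\gamma(v_{0})\in A\}\right)>\delta$ for all large $n$.

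Second, I would use the classical theory of random walks on projective space. Since $\Gamma$ is strongly irreducible with a proximal element and $v_{0}\neq 0$, the walk $\nu^{*n}*\delta_{[v_{0}]}$ on $\bP(\bR^{r})$ converges to the Furstenberg stationary measure, which is non-atomic and assigns mass $0$ to every proper projective subspace (Furstenberg; Guivarc'h--Raugi); hence $\nu^{*n}\!\left(\{\gamma:\gamma(v_{0})\in W_{1}\cup\dots\cup W_{M}\}\right)\to 0$ for any finite family of proper subspaces $W_{j}\subset\bR^{r}$. Now build the haystack greedily: given $w_{1},\dots,w_{k}\in A\cap\Gamma.v_{0}$, any $\le r$ of which are linearly independent, the spans of the subsets of $\{w_{1},\dots,w_{k}\}$ of size $\le r-1$ form finitely many proper subspaces, so for $n$ large the set $\{\gamma:\gamma(v_{0})\in A\}$ has $\nu^{*n}$-mass exceeding $\delta$ while the $\gamma$ mapping $v_{0}$ into their union have mass below $\delta$; thus some $\gamma$ yields $w_{k+1}:=\gamma(v_{0})\in A\cap\Gamma.v_{0}$ outside all of them. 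Then any $\le r$ of $w_{1},\dots,w_{k+1}$ remain linearly independent (in particular the $w_{i}$ are pairwise distinct), so $\cH:=\{w_{1},w_{2},\dots\}$ is a haystack inside $A\cap\Gamma.v_{0}$. Equivalently, the argument shows that $A\cap\Gamma.v_{0}$ is contained in no finite union of proper subspaces, which is the hypothesis of Lemma~\ref{haystackexpansion}.

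The step I expect to be the genuine obstacle is the vanishing of $\int\Phi_{n}\,d\tau$ — that the non-rational part of the spectral measure contributes nothing to the $\Gamma$-average. This takes us out of soft ergodic theory and into quantitative equidistribution of random walks on the torus (the Bourgain--Furman--Lindenstrauss--Mozes theorem, or at least the Benoist--Quint classification of stationary measures on $\bT^{r}$ plus a non-concentration input), and it is exactly why the hypotheses ``strongly irreducible'' and ``contains a proximal element'' are imposed. Everything else — the spectral bookkeeping, the Markov estimate, the projective escape bound, and the greedy assembly — is routine; the only further points to check are harmless ones (the contragredient representation inherits strong irreducibility and proximality; periodicity of the walks is handled by keeping the identity in $\operatorname{supp}\nu$ or by passing to Ces\`aro averages; and $v_{0}\neq 0$).
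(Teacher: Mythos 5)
Your proposal is essentially correct and follows the same route as the paper: spectral decomposition of $\sigma_B$, averaging $\mu(B\cap\gamma(v_0).B)$ against powers of a generating random walk on $\Gamma$, killing the $\tau$-part via Bourgain--Furman--Lindenstrauss--Mozes equidistribution on $\widehat\Lambda$, a Markov-type inequality to pass from expectation to positive probability, escape of $\gamma_n[v_0]$ from proper subspaces via Furstenberg theory, and a greedy construction of the haystack. The one technical difference worth noting: you argue for \emph{pointwise} (non-Ces\`aro) convergence, both of $\nu^{*n}*\delta_\xi\to m_{\widehat\Lambda}$ for irrational $\xi$ and of $\nu^{*n}*\delta_{[v_0]}$ to the Furstenberg measure on $\bP(\bR^r)$. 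The first requires the full quantitative Theorem A of BFLM (plus the contrapositive/bounded-denominator argument) rather than their Ces\`aro-averaged Corollary B, and the second needs aperiodicity, which you correctly arrange by putting the identity in $\mathrm{supp}\,\nu$. The paper sidesteps both delicacies by working throughout with Ces\`aro averages $\frac1N\sum_{n\le N}$ and an \emph{upper $(p,v)$-density} (so only the weaker Corollary B and Benoist--Quint's zero-mass-on-hyperplanes lemma for stationary measures are needed, and no aperiodicity hypothesis is required). Both routes prove the same intermediate statement --- that $\{v:\mu(B\cap v.B)>c\}\cap\Gamma.v_0$ is not contained in a finite union of hyperplanes --- and both finish with a greedy selection; the paper isolates this last step as a separate lemma, which you effectively reprove inline. (Your closing cross-reference to Lemma~\ref{haystackexpansion} should instead point to that ``not contained in finitely many hyperplanes $\Rightarrow$ contains a haystack'' lemma, i.e.\ Lemma~\ref{haystackhyperplanes}.)
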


The following two crucial results from \cite{BF} now allows us to derive Theorem \ref{maindynamical} using Lemma \ref{haystack}. Note that in \cite{BF}, the results are stated using the \emph{normalized spectral measure} $\widetilde{\sigma_B} := \frac{1}{\mu(B)^2} \sigma_B$, while here we use the standard spectral measure $\sigma_B$. The different formulations are equivalent, however.   
\begin{lem}[Theorem 3.1, \cite{BF}] \label{haystackexpansion}
     Let $(X, \mu)$ be an ergodic $\Lambda$-space, let $B \in \cB_X$ be a set of positive measure and let $\sigma_{\mu, B}$ be its spectral measure.  Let $\sigma_B (\mbox{Rat } \Lambda \setminus \{0\}) < \epsilon \mu(B) ^2$.  Then for any haystack $\cH \subseteq \Lambda$ there is $\lambda_\epsilon \in \cH$ such that
    \begin{equation*}
        \mu (\bZ \lambda_\epsilon . B) > 1 - \epsilon.  
    \end{equation*}
\end{lem}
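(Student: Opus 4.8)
The plan is to study, for each nonzero $\lambda\in\Lambda$, the orthogonal projection $P_\lambda$ in $L^2(X,\mu)$ onto the closed subspace of functions invariant under the cyclic subgroup $\bZ\lambda$. By von Neumann's mean ergodic theorem $P_\lambda$ is the strong limit of the symmetric Ces\`aro averages $\frac{1}{2N+1}\sum_{k=-N}^{N}(k\lambda).(\,\cdot\,)$, and it coincides with conditional expectation onto the $\sigma$-algebra of $\bZ\lambda$-invariant sets. Writing $f=\mathbf 1_B$ and $Y_\lambda:=\bZ\lambda.B=\bigcup_{k\in\bZ}(k\lambda).B$, I would first record two facts. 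Since every partial ergodic average applied to $f$ is pointwise bounded by $\mathbf 1_{Y_\lambda}$, passing to an almost everywhere convergent subsequence gives $0\le P_\lambda f\le\mathbf 1_{Y_\lambda}$, so $P_\lambda f$ is supported in $Y_\lambda$; moreover $\int_X P_\lambda f\,d\mu=\int_X f\,d\mu=\mu(B)$. Secondly, applying Definition \ref{spectraldef} termwise to the ergodic averages and using that $\frac{1}{2N+1}\sum_{k=-N}^{N}\langle\xi,k\lambda\rangle\to\mathbf 1[\langle\xi,\lambda\rangle=0]$ pointwise in $\xi$, dominated convergence yields the identity $\langle P_\lambda f,f\rangle=\sigma_B(\lambda^{\perp})$, where $\lambda^{\perp}:=\{\xi\in\widehat\Lambda:\langle\xi,\lambda\rangle=0\}$.

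Next I would combine these inputs. As $P_\lambda$ is a self-adjoint projection, $\|P_\lambda f\|_{2}^{2}=\langle P_\lambda f,f\rangle=\sigma_B(\lambda^{\perp})$; since $P_\lambda f\ge0$ is supported in $Y_\lambda$ and integrates to $\mu(B)$, the Cauchy--Schwarz inequality gives
\begin{equation*}
\mu(B)^{2}=\Big(\int_{Y_\lambda}P_\lambda f\,d\mu\Big)^{2}\le\|P_\lambda f\|_{2}^{2}\,\mu(Y_\lambda)=\sigma_B(\lambda^{\perp})\,\mu(\bZ\lambda.B),
\end{equation*}
so it suffices to produce $\lambda_\epsilon\in\cH$ with $\sigma_B(\lambda_\epsilon^{\perp})<\mu(B)^{2}/(1-\epsilon)$ (the case $\epsilon\ge1$ being trivial, since always $\mu(\bZ\lambda.B)\ge\mu(B)>0$). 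Using the decomposition \eqref{spectraldecomp} together with $0\in\lambda^{\perp}$, for every $\lambda$ one has $\sigma_B(\lambda^{\perp})\le\mu(B)^{2}+\sigma_B(\Rat\Lambda\setminus\{0\})+\tau(\lambda^{\perp})<(1+\epsilon)\mu(B)^{2}+\tau(\lambda^{\perp})$; since $(1+\epsilon)<\frac{1}{1-\epsilon}$, it is enough to find $\lambda_\epsilon\in\cH$ with $\tau(\lambda_\epsilon^{\perp})<\mu(B)^{2}\big(\tfrac{1}{1-\epsilon}-1-\epsilon\big)=\mu(B)^{2}\tfrac{\epsilon^{2}}{1-\epsilon}$.

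The heart of the matter, and the step I expect to be the main obstacle, is controlling $\tau(\lambda^{\perp})$ along the haystack, and this is exactly where Definition \ref{haystackdef} enters. For any $\xi\in\widehat\Lambda\setminus\Rat\Lambda$ the subgroup $\ker\xi$ has infinite index in $\Lambda\cong\bZ^{r}$, hence has rank at most $r-1$ and spans a real subspace of dimension at most $r-1$; by the defining property of a haystack, at most $r-1$ elements of $\cH$ can lie in that subspace, so $\#(\cH\cap\ker\xi)\le r-1$. Consequently, by Tonelli's theorem,
\begin{equation*}
\sum_{\lambda\in\cH}\tau(\lambda^{\perp})=\int_{\widehat\Lambda\setminus\Rat\Lambda}\#\{\lambda\in\cH:\langle\xi,\lambda\rangle=0\}\;d\sigma_B(\xi)\le(r-1)\,\sigma_B(\widehat\Lambda)=(r-1)\mu(B)<\infty,
\end{equation*}
so $\tau(\lambda^{\perp})$ falls below any prescribed positive threshold for all but finitely many $\lambda\in\cH$. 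Choosing such a $\lambda_\epsilon$ below the threshold of the previous paragraph and feeding it into the Cauchy--Schwarz bound yields $\mu(\bZ\lambda_\epsilon.B)\ge\mu(B)^{2}/\sigma_B(\lambda_\epsilon^{\perp})>1-\epsilon$, as desired. What remains is routine: the mean ergodic theorem identification of $P_\lambda$, the a.e.\ bound $P_\lambda f\le\mathbf 1_{Y_\lambda}$, the termwise application of Bochner's formula from Definition \ref{spectraldef}, and the elementary inequalities among $\epsilon$, $1+\epsilon$ and $\frac{1}{1-\epsilon}$.
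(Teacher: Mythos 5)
Your proof is correct and complete: the mean‑ergodic‑theorem bound $\mu(B)^2\le\sigma_B(\lambda^{\perp})\,\mu(\bZ\lambda.B)$, the decomposition \eqref{spectraldecomp}, and the counting argument $\sum_{\lambda\in\cH}\tau(\lambda^{\perp})\le(r-1)\mu(B)$ (valid because kernels of non-rational characters span subspaces of dimension at most $r-1$, which by the haystack property contain at most $r-1$ elements of $\cH$) fit together exactly as claimed. Note that this paper does not prove the lemma itself but imports it from \cite{BF} (Theorem 3.1); your argument is essentially a self-contained reconstruction of that proof, following the same spectral/ergodic-averaging route, so there is nothing to flag.
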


\begin{lem}[Proposition. 4.1, \cite{BF}] \label{ergodiccomponent}
    For every $\epsilon > 0$, there is an integer $n$, a positive constant $c$ and a $\Lambda(n)$-invariant and ergodic probability measure $\nu$  on $X$ such that 
    \begin{itemize}
        \item $\nu (B) > \frac{\mu(B)}{3}$
        \item $\sigma _{\nu, B} (\Rat\mbox{ }\Lambda(n) \setminus \{0\} ) < \epsilon$
        \item $\mu( \bigcap_{v \in F} v.B) \geq c \cdot \nu( \bigcap_{v \in F} v.B)$. 
    \end{itemize}
    for any finite subset $F$ of $\Lambda(n)$. 
\end{lem}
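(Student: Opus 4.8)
The plan is to obtain $\nu$ as a single ergodic component of $\mu$ after restricting the $\Lambda$-action to $\Lambda(n)$, for $n$ chosen so that the ``rational part'' of the spectral measure has been pushed to $0$.

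\emph{Ergodic decomposition over the finite-index subgroup.} Fix $n$ and view $(X,\mu)$ as a $\Lambda(n)$-space. Since $\mu$ is $\Lambda$-ergodic and $\Lambda(n)=n!\,\Lambda$ is normal of finite index, the finite group $\Lambda/\Lambda(n)$ acts ergodically, hence transitively, on the (at most countable) set of $\Lambda(n)$-ergodic components of $\mu$; this set is therefore finite of size $k=k(n)\le[\Lambda:\Lambda(n)]$, and $\mu=\tfrac1k\sum_{i=1}^{k}\mu_i$ where $\mu_1,\dots,\mu_k$ are the distinct $\Lambda(n)$-ergodic components. Two immediate consequences. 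First, $\mu_i\le k\mu$ as measures, so taking $c:=1/k>0$ makes the third bullet hold for \emph{every} index $i$, applied to the set $\bigcap_{v\in F}v.B$. Second, comparing Fourier coefficients over $\Lambda(n)$ of $v\mapsto\mu(B\cap v.B)=\tfrac1k\sum_i\mu_i(B\cap v.B)$ gives $\sigma^{(n)}_{\mu,B}=\tfrac1k\sum_{i=1}^{k}\sigma_{\mu_i,B}$, where $\sigma^{(n)}_{\mu,B}$ denotes the spectral measure of $B$ for the $\Lambda(n)$-space $(X,\mu)$. It remains to choose $n$, and then $i$, so that $\nu:=\mu_i$ also satisfies the first two bullets.

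\emph{Spectral concentration and the choice of $n$.} The measure $\sigma^{(n)}_{\mu,B}$ on $\widehat{\Lambda(n)}$ equals the pushforward $(p_n)_\ast\sigma_{\mu,B}$ under the restriction homomorphism $p_n\colon\widehat{\Lambda}\to\widehat{\Lambda(n)}$ dual to $\Lambda(n)\hookrightarrow\Lambda$: indeed $\langle\xi,v\rangle=\langle p_n\xi,v\rangle$ for $v\in\Lambda(n)$, so uniqueness in Bochner's theorem forces $\sigma^{(n)}_{\mu,B}=(p_n)_\ast\sigma_{\mu,B}$. As $p_n$ has finite kernel, $p_n^{-1}(\Rat\Lambda(n))=\Rat\Lambda$, while $p_n^{-1}(\{0\})=\{\xi\in\widehat{\Lambda}: n!\,\xi=0\}$ is the finite group of $n!$-torsion characters. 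Therefore
\begin{equation*}
\sigma^{(n)}_{\mu,B}\big(\Rat\Lambda(n)\setminus\{0\}\big)=\sigma_{\mu,B}(\Rat\Lambda)-\sigma_{\mu,B}\big(\{\xi\in\widehat{\Lambda}: n!\,\xi=0\}\big).
\end{equation*}
The $n!$-torsion subgroups increase, as $n\to\infty$, to the full torsion subgroup $\Rat\Lambda$, so continuity of the finite measure $\sigma_{\mu,B}$ makes the right-hand side tend to $0$; fix $n$ so large that it is $<\tfrac23\,\epsilon\,\mu(B)$. Now set $b_i:=\mu_i(B)$ and $s_i:=\sigma_{\mu_i,B}(\Rat\Lambda(n)\setminus\{0\})$, so $\tfrac1k\sum_i b_i=\mu(B)$ and $\tfrac1k\sum_i s_i<\tfrac23\epsilon\,\mu(B)$. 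Splitting $\sum_i b_i$ at level $\mu(B)/3$ gives $\#\{i:b_i>\mu(B)/3\}\ge\tfrac23\mu(B)\,k$, and Markov's inequality gives $\#\{i:s_i\ge\epsilon\}\le\tfrac1\epsilon\sum_i s_i<\tfrac23\mu(B)\,k$. The second set is strictly smaller than the first, so some $i$ satisfies $b_i>\mu(B)/3$ and $s_i<\epsilon$ simultaneously; for this $i$ the $\Lambda(n)$-invariant ergodic probability measure $\nu:=\mu_i$ meets all three requirements, the third with $c=1/k$.

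The one ingredient that is not pure bookkeeping is the finiteness of the ergodic decomposition of $\mu|_{\Lambda(n)}$ together with the uniform domination $\mu_i\le k\mu$ — this is exactly where $\Lambda$-ergodicity of $\mu$ is essential, and it is what makes the third bullet cost nothing. Everything else is the transformation rule for the spectral measure under restriction of the acting group, the elementary identification of $p_n^{-1}(\Rat\Lambda(n))$ and $p_n^{-1}(\{0\})$, and an averaging argument, so I do not anticipate a serious obstacle.
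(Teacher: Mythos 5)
Your proof is correct. Note that the paper does not prove this lemma internally; it imports it verbatim from \cite{BF} (Proposition 4.1), so there is no in-paper argument to compare against. Your route is the natural one and, in structure, exactly what the cited statement suggests: the finite ergodic decomposition $\mu=\tfrac1k\sum_i\mu_i$ over the finite-index subgroup $\Lambda(n)$ (transitivity of $\Lambda/\Lambda(n)$ on the atoms of the $\Lambda(n)$-invariant $\sigma$-algebra giving equal weights and $c=1/k$), the identity $\sigma^{(n)}_{\mu,B}=(p_n)_*\sigma_{\mu,B}$ via uniqueness in Bochner's theorem, the identifications $p_n^{-1}(\Rat\Lambda(n))=\Rat\Lambda$ and $p_n^{-1}(\{0\})=\{\xi:\,n!\,\xi=0\}$, continuity from below of the finite measure $\sigma_{\mu,B}$ to choose $n$, and the splitting-plus-Markov pigeonhole to find a component with $\mu_i(B)>\mu(B)/3$ and rational spectral mass below $\epsilon$. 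All steps check out, including the only place where $\Lambda$-ergodicity is genuinely used (finiteness and uniform domination $\mu_i\le k\mu$), so the argument stands as a complete, self-contained proof of the quoted lemma.
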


\begin{proof}[Proof of Theorem \ref{maindynamical}]
Let $v_1,\ldots,v_r \in \Lambda$ be linearly independent vectors and denote by $\Gamma = SL_r(\bZ)$. 
    Pick $\epsilon > 0$ such that $\epsilon < \frac{\mu(B)^2}{18(r-1)}$ and, using Lemma \ref{ergodiccomponent}, an $n \in \bZ$ and a $\Lambda(n)$-ergodic measure $\nu$ on $X$ such that 
    \begin{equation*}
        \sigma_{\nu, B} (\mbox{Rat } \Lambda(n) \setminus \{0\}) <  \epsilon \mu(B)^2, \mbox{ and } \nu(B) > \frac{\mu(B)}{3}.
    \end{equation*}
    Then 
    \begin{equation*}
        \begin{split}
            \nu(B)^2 - \sigma_{\nu, B} (\mbox{Rat } \Lambda(n) \setminus \{0\}) \geq \frac{\mu(B)^2}{9} - \sigma_{\nu, B} (\mbox{Rat } \Lambda(n) \setminus \{0\}) 
            > \frac{\mu(B)^2}{18}.
        \end{split}
    \end{equation*}
 Using Remark \ref{proximalelement}, $\Gamma$ acts on $\bR^r$ strongly irreducibly and with a proximal element. By Lemma \ref{haystack} applied to $\nu$ and $\Lambda(n)$ with $c = \frac{\mu(B)^2}{18}$ and $v_0 = n!v_1$, using Lemma \ref{haystackexpansion} there is $u_1 \in \Gamma. v_0$ such that $$\nu(B \cap u_1.B) > \frac{\mu(B)^2}{18}$$ and $\nu((\bZ u_1.B)^c) < \epsilon$. Now for any set of vectors $u_2, ..., u_r \in \Lambda(n)$, we have, since $\Lambda(n)$ preserves $\nu$, that $\nu(u_k.((\bZ u_1.B)^c) = \nu((\bZ u_1. B)^c) < \epsilon$, which gives 
    \begin{equation*}
        \begin{split}
            \nu( B \cap u_1. B \cap (u_2 + \bZ u_1).B \cap ... \cap (u_r + \bZ u_1).B ) \\
            =1 - \nu( (B \cap u_1. B)^c \cup u_2.(\bZ u_1.B)^c \cup ... \cup u_r .(\bZ u_1.B)^c ) \\
            \geq 1 - \nu((B \cap u_1.B)^c ) - \sum_{k=2}^r \nu(u_k.(\bZ u_1.B)^c) \\
            \geq \nu(B \cap u_1.B) - \sum_{k=2}^r \nu(u_k.(\bZ u_1.B)^c) \\
            \geq \frac{\mu(B)^2}{18} - (r-1)\epsilon > 0.
        \end{split}
    \end{equation*}
    Thus there are $m_2, ..., m_r \in \bZ$ such that 
    \begin{equation*}
        \nu(B \cap u_1.B \cap (u_2 + m_2 u_1).B \cap...\cap (u_r + m_r u_1).B) > 0, 
    \end{equation*}
    which implies, by the third point of Lemma \ref{ergodiccomponent}, that 
    \begin{equation*}
        \mu(B \cap u_1.B \cap (u_2 + m_2 u_1).B \cap...\cap (u_r + m_r u_1).B) > 0.
    \end{equation*}
    Since $u_1 \in \Aut_1 (\Lambda) . (n! v_1)$ there is some $\gamma_0 \in \Aut_1 (\Lambda)$ such that $\gamma_0 (n! v_1) = u_1$. Choosing $u_k = n! \gamma_0 (v_k)$ for $k = 2, ..., r$, we get 
    \begin{equation*}
        \begin{split}
            \mu(B \cap n! \gamma_0 (v_1).B \cap (n! \gamma_0 (v_2) + m_2 n! \gamma_0 (v_1)).B \cap...\cap (n! \gamma_0 (v_r) + m_r n! \gamma_0 (v_1)).B) \\
            = \mu( B \cap n! S \gamma_0 (v_1) . B \cap n! S \gamma_0 (v_2). B \cap ... \cap n! S\gamma_0 (v_r) . B) > 0, 
        \end{split} 
    \end{equation*}
    where $S := (S^\beta _2) ^{m_2} \cdots (S^\beta _r) ^{m_r}$ (recall Definition \ref{Sdef}), for $\beta = \{u_1, ..., u_r\}$. Note that since $\{v_1, ..., v_r\}$ are linearly independent, so are $u_1, ..., u_r$, and the $S^\beta _j$ are thus well-defined and contained in $\Aut_1 (\Lambda)$. Therefore $S \in \Aut_1 (\Lambda)$, so the same holds for $\gamma := S \gamma_0$, and we are done.   
\end{proof}

The rest of the paper is devoted to the proof of Lemma \ref{haystack}. 

\section{Proof of Lemma \ref{haystack}}
To prove the statement, we make use of the following lemma, which reduces the problem to showing that for $c > 0$ under consideration, the set
\begin{equation*}
    \{v \in \Lambda: \mu(B \cap v. B) > c \} \cap \Gamma .v_0
\end{equation*}
is not contained in a finite union of hyperplanes. 
\begin{lem} \label{haystackhyperplanes}
    Let $E \subseteq \bZ^r$, and suppose that $E$ is not contained in a finite union of hyperplanes in $\bR^r$. Then $E$ contains a haystack. 
\end{lem}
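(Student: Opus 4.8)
The statement is a purely combinatorial/geometric fact: a subset $E \subseteq \bZ^r$ not contained in a finite union of hyperplanes contains an infinite set $\cH$ any $r$ of whose elements are linearly independent over $\bR$. I would build $\cH = \{w_1, w_2, \dots\}$ greedily, one element at a time, maintaining the invariant that every $r$-subset of $\{w_1, \dots, w_m\}$ is linearly independent.

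\begin{proof}[Proof sketch]
We construct $\cH = \{w_1, w_2, \dots\}$ inductively so that every $r$-element subset is linearly independent over $\bR$. Pick $w_1 \in E$ arbitrarily (nonzero, which we may assume since a single hyperplane $\{0\}$ is not all of $\bZ^r$; in fact any finite set is contained in finitely many hyperplanes so $E$ is infinite). Suppose $w_1, \dots, w_m$ have been chosen with the stated property. Every $(r-1)$-element subset $\{w_{i_1}, \dots, w_{i_{r-1}}\}$ of $\{w_1, \dots, w_m\}$ spans (over $\bR$) a subspace of dimension at most $r-1$, hence is contained in some hyperplane $H$ through the origin; there are only finitely many such subsets, say giving hyperplanes $H_1, \dots, H_N$ (if $m < r-1$ we may pad each chosen subset to size $r-1$ by adjoining further elements, or simply observe the spans still have dimension $< r$). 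By hypothesis $E \not\subseteq H_1 \cup \dots \cup H_N$, so we may choose $w_{m+1} \in E \setminus (H_1 \cup \dots \cup H_N)$, and automatically $w_{m+1} \neq w_i$ for $i \le m$ since each $w_i$ lies in one of the $H_j$. It remains to check that every $r$-subset of $\{w_1, \dots, w_{m+1}\}$ is linearly independent. Such a subset either lies entirely in $\{w_1, \dots, w_m\}$, in which case independence holds by the inductive hypothesis, or it has the form $\{w_{i_1}, \dots, w_{i_{r-1}}, w_{m+1}\}$ with $i_1 < \dots < i_{r-1} \le m$. In the latter case, $w_{i_1}, \dots, w_{i_{r-1}}$ are linearly independent (being a subset of size $r-1$ of $\{w_1,\dots,w_m\}$, which by hypothesis has all $r$-subsets independent, hence all smaller subsets independent too), so their span is exactly one of the hyperplanes $H_j$; since $w_{m+1} \notin H_j$, the vector $w_{m+1}$ is not in that span, and therefore $\{w_{i_1}, \dots, w_{i_{r-1}}, w_{m+1}\}$ is linearly independent. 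This completes the induction, and $\cH := \{w_1, w_2, \dots\}$ is the desired haystack.
\end{proof}

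The only mild subtlety is the bookkeeping at the start when $m < r-1$ (there are no $(r-1)$-subsets yet), but this is handled by noting the spans involved are proper subspaces regardless, or by simply starting the greedy process once $r-1$ elements have been chosen — any $r-1$ vectors drawn from an $E$ not contained in finitely many hyperplanes can be taken linearly independent by the same avoidance argument applied to the (finitely many) subspaces spanned by the partial selections. No step presents a real obstacle; the proof is a standard general-position argument, and the hypothesis "not contained in a finite union of hyperplanes" is exactly what makes each greedy step possible.
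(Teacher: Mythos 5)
Your proof is correct and takes essentially the same greedy avoidance approach as the paper: inductively adjoin a new element of $E$ avoiding the finitely many hyperplanes spanned by $(r-1)$-subsets of the points already chosen. The paper's proof is slightly terser (it does not dwell on the case $m < r-1$), but the argument is the same.
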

\begin{proof}
    Define the haystack $\cH \subseteq E$ as follows. Pick some nonzero $v_1 \in E$ and put $\cH_1 := \{v_1\}$. Supposing that we have defined $\cH_k$, pick some $v_{k+1} \in E$ which does not lie in any hyperplane spanned by $r-1$ elements in $\cH_k$; this is possible since there are only finitely many such hyperplanes, and put $\cH_{k+1} := \cH_k \cup \{v_{k+1}\}$, and then $\cH := \bigcup_{k} \cH_k$. Then any set of $r$ elements in $E$ are linearly independent over $\bR$; therefore they generate a finite index subgroup of $\bZ^r$. 
\end{proof}

\subsection{Random walks on $\Gamma$}
Throughout this section, we will make the standing assumption that $\Gamma < \GL_r (\bZ)$ is a strongly irreducible subgroup containing a proximal element. 

Let $p$ be a random walk on $\Gamma$. We say that $p$ is \emph{generating} if the support of $p$, denoted $\mbox{supp }p$, generates $\Gamma$. We then define the probability space $(\Omega, \bP)$ by taking $\Omega := \Gamma^{\bN}$, $\bP := p^{\otimes \bN }$. The expectation $\bE$ is taken with respect to $\bP$. We let $\gamma_n ((g_1, g_2, ...)) := g_1 \cdots g_n$ be the $n$:th step of the random walk, so that $\gamma_n$ has distribution $p^{*n}$. We will need to make use of the following results on random walks. We denote by $[ \cdot ]$ the projection map from $\bR^r$ to the projective plane $\bP (\bR^r)$.

\begin{lem}[\cite{BQ}, Lemma 4.6] \label{planezero}
    Let $p$ be a generating random walk on $\Gamma$. Then for any $p$-stationary measure $\nu$ on the projective plane $\bP(\bR^r)$, 
    \begin{equation*}
        \nu([L]) = 0
    \end{equation*}
    for every hyperplane $L \subseteq \bR^r$. 
\end{lem}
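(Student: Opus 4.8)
The plan is to argue by contradiction, extracting from a stationary measure that charges a proper subspace a \emph{finite} collection of proper subspaces that $\Gamma$ must permute, and then contradicting strong irreducibility. We may assume $r \ge 2$, since for $r=1$ the statement is vacuous. So suppose some $p$-stationary probability measure $\nu$ on $\bP(\bR^r)$ charges a proper projective subspace, and let $\cA$ be the family of all proper projective subspaces $[V]$, with $0 \ne V \subsetneq \bR^r$, satisfying $\nu([V])>0$; by assumption $\cA \ne \emptyset$. Let $d_0$ be the minimal projective dimension occurring among members of $\cA$, and let $m_0$ be the supremum of $\nu(W)$ over \emph{all} projective subspaces $W$ of dimension $d_0$. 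First I would check that this supremum is attained: the Grassmannian of $d_0$-planes is compact, and $W \mapsto \nu(W)$ is upper semicontinuous, because a sequence of $d_0$-planes converging to $W$ eventually lies inside any fixed neighbourhood of $W$, so $\limsup_k \nu(W_k) \le \nu(W)$ after shrinking the neighbourhood down to the closed set $W$. Since there is a member of $\cA$ of dimension exactly $d_0$, we get $m_0 > 0$, and the maximum is realised by some $W_0 = [V_0] \in \cA$.

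Next I would establish two facts about the set $\cM$ of all dimension-$d_0$ projective subspaces of maximal mass $m_0$. (i) $\cM$ is \emph{finite}: two distinct $d_0$-planes $[V_1] \ne [V_2]$ meet in $[V_1 \cap V_2]$, which is either empty or a proper projective subspace of dimension strictly less than $d_0$, hence $\nu$-null by minimality of $d_0$; so the members of $\cM$ are pairwise $\nu$-disjoint and $|\cM| \le 1/m_0 < \infty$. (ii) $\cM$ is preserved by $\operatorname{supp} p$: stationarity gives $\nu(W) = \int_\Gamma \nu(g^{-1}W)\,dp(g)$, and for $W \in \cM$ each integrand is the mass of some $d_0$-plane, hence is $\le m_0$; therefore the average can equal $m_0$ only if $\nu(g^{-1}W) = m_0$ for $p$-a.e.\ $g$, i.e.\ $g^{-1}W \in \cM$ for every $g \in \operatorname{supp} p$.

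Then I would upgrade (ii) to full $\Gamma$-invariance. Each $g \in \operatorname{supp} p$ induces a self-map $W \mapsto g^{-1}W$ of the finite set $\cM$, which is injective because it is the restriction of an automorphism of the Grassmannian, and hence is a permutation of $\cM$. Consequently the group generated by $\operatorname{supp} p$ — which is all of $\Gamma$, since $p$ is generating — acts on the finite set $\cM$ by permutations. Finally, the kernel $\Gamma_0$ of this permutation action has finite index in $\Gamma$ and fixes every $[V] \in \cM$, hence preserves the proper nonzero linear subspace $V_0 \subsetneq \bR^r$ attached to $W_0$. This contradicts strong irreducibility of $\Gamma$, which forces $\Gamma_0$ to act irreducibly on $\bR^r$. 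Therefore $\cA = \emptyset$, and in particular $\nu([L]) = 0$ for every hyperplane $L \subseteq \bR^r$.

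The step I expect to require the most care is the last upgrade, from the semigroup statement ``$\operatorname{supp} p$ preserves $\cM$'' to the group statement ``$\Gamma$ permutes $\cM$'': this is exactly where finiteness of $\cM$ is indispensable, via the elementary observation that an injective self-map of a finite set is automatically a bijection, so the sub-semigroup of $\Gamma$ acting on $\cM$ is in fact a group. Everything else is soft — upper semicontinuity of $W \mapsto \nu(W)$, compactness of Grassmannians, and a dimension count for intersections. Note, incidentally, that the proximality hypothesis on $\Gamma$ is not used here; only strong irreducibility is needed for this lemma.
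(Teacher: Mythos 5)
The paper does not supply its own proof of this statement --- it is cited directly as Lemma 4.6 of Benoist--Quint \cite{BQ} --- so there is nothing internal to compare against. Your argument is the classical Furstenberg-type proof: pass to the projective subspaces of minimal dimension $d_0$ carrying maximal $\nu$-mass $m_0$; observe that pairwise intersections drop in dimension and hence are $\nu$-null, so this family $\cM$ is finite with $|\cM|\le 1/m_0$; upgrade the ``support-of-$p$ maps $\cM$ into $\cM$'' statement coming from stationarity to a genuine $\Gamma$-permutation action via the injective-implies-bijective trick on a finite set; and contradict strong irreducibility by producing the finite-index kernel $\Gamma_0$ of the permutation action, which stabilizes a fixed proper nonzero $V_0$. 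This is correct, and it matches the standard argument that one finds in \cite{BQ}.

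Two small remarks. The passage from ``$\nu(g^{-1}W)=m_0$ for $p$-a.e.\ $g$'' to ``for every $g$ in the support of $p$'' is automatic here because $\Gamma < \GL_r(\bZ)$ is a countable discrete group, where a $p$-null set is exactly one disjoint from the support; in a non-discrete group one would instead invoke upper semicontinuity of $g\mapsto \nu(g^{-1}W)$ together with closedness of the superlevel set $\{\nu(g^{-1}W)\ge m_0\}$. And your closing observation that proximality plays no role in this lemma is accurate --- in the paper it is needed elsewhere (notably as an input to Theorem~\ref{BLFM}), whereas here strong irreducibility alone carries the contradiction.
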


\begin{thm}[\cite{BLFM}, Corollary B.]\label{BLFM}
    Let $p$ be a generating random walk on $\Gamma$, and consider the natural action of $\Gamma$ on $\widehat{\Lambda}$. If $\tau$ is a probability measure on $\widehat{\Lambda}$ which satisfies $\tau(\Rat \mbox{ } \Lambda) = 0$, then 
    \begin{equation*}
        \frac{1}{N} \sum_{n = 1}^N p^{*n} * \tau \to m_{\widehat{\Lambda}}
    \end{equation*}
    weakly as $N \to \infty$, where $m_{\widehat{\Lambda}}$ is the Haar measure on $\widehat{\Lambda}$. 
\end{thm}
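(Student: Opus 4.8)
The plan is to reduce the statement to a quantitative \emph{granulation} dichotomy for measures on $\widehat{\Lambda}$ under the dual matrix random walk, and then to extract the qualitative conclusion from the hypothesis $\tau(\Rat\,\Lambda)=0$ by a soft limiting argument. \textbf{Fourier reduction.} First I would identify $\widehat{\widehat{\Lambda}}$ with $\Lambda$ and encode a probability measure $\rho$ on $\widehat\Lambda$ by its Fourier coefficients $\widehat\rho(v):=\int_{\widehat\Lambda}\langle\xi,v\rangle\,d\rho(\xi)$ for $v\in\Lambda$. Since the characters span a dense subspace of $C(\widehat\Lambda)$, weak convergence $\rho_N\to m_{\widehat\Lambda}$ of $\rho_N:=\tfrac1N\sum_{n=1}^N p^{*n}*\tau$ is equivalent to $\widehat{\rho_N}(v)\to 0$ for every $v\in\Lambda\setminus\{0\}$ (the coefficient at $v=0$ being constantly $1$). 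Recalling that $p^{*n}*\tau$ is the law of $\gamma_n.\xi$ with $\gamma_n\sim p^{*n}$ independent of $\xi\sim\tau$, and letting $g\mapsto g^\vee$ denote the action of $\Gamma$ on $\Lambda$ adjoint to its action on $\widehat\Lambda$ (so $\langle g.\xi,v\rangle=\langle\xi,g^\vee v\rangle$; the resulting subgroup of $\GL_r(\bZ)$ is again strongly irreducible and contains a proximal element, as transposition and inversion preserve these properties), one computes
\begin{equation*}
\widehat{\rho_N}(v)\;=\;\frac1N\sum_{n=1}^N \bE\!\left[\widehat\tau\big(\gamma_n^\vee v\big)\right].
\end{equation*}
So the goal becomes: for each fixed $v\neq 0$, this Cesàro average of Fourier coefficients of $\tau$ along the random trajectory $(\gamma_n^\vee v)_n$ tends to $0$.

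\textbf{The quantitative dichotomy.} The crux is an effective inverse theorem of Bourgain--Furman--Lindenstrauss--Mozes type: for every $\varepsilon>0$ there exist $q=q(\varepsilon)\in\bN$, $\kappa=\kappa(\varepsilon)>0$ and $N_0=N_0(\varepsilon,\tau)$ such that for all $N\geq N_0$, \emph{either} $\rho_N$ is $\varepsilon$-close to $m_{\widehat\Lambda}$ in the L\'evy--Prokhorov metric, \emph{or} $\tau\big(R_q^{(\eta_N)}\big)\geq\kappa$, where $R_q\subseteq\Rat\,\Lambda$ is the finite set of torsion points of order at most $q$, $R_q^{(\eta_N)}$ is its $\eta_N$-neighbourhood, and $\eta_N\to 0$ as $N\to\infty$. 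In words: if the Cesàro average fails to equidistribute, then a definite proportion of the mass of $\tau$ is trapped arbitrarily close to rationals of bounded denominator.

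\textbf{Proof of the dichotomy --- the main obstacle.} This is the technical heart, where the hypotheses on $\Gamma$ are consumed, and I expect it to be by far the hardest part. One first proves a purely harmonic-analytic statement on $\widehat\Lambda$: if $|\widehat\tau(w)|$ is not small, then $\tau$ puts a definite fraction of its mass within distance $\sim\|w\|^{-1}$ of a coset of the subtorus killed by $w$; a multi-scale bootstrap (as in BFLM) then upgrades such "concentration near a rational direction" to "concentration near a genuine rational point", with denominator controlled by the amount of non-equidistribution. One then feeds in the dynamics of the dual walk: by strong irreducibility together with the existence of a proximal element, $\|\gamma_n^\vee v\|$ grows exponentially fast almost surely and the directions $[\gamma_n^\vee v]$ disperse according to the unique Furstenberg stationary measure on $\bP(\bR^r)$, which by Lemma~\ref{planezero} charges no hyperplane. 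Hence the rational subtori attached to the vectors $\gamma_n^\vee v$ have denominators diverging and directions dispersing, so $\tau$ can be trapped near a \emph{fixed} bounded-denominator rational set for a positive proportion of the steps $n$ only if $\tau$ genuinely has an atom at a rational point. Averaging these "most $n$" estimates over $n\le N$ then yields the quantitative alternative of the previous paragraph.

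\textbf{Conclusion.} Suppose the theorem failed. Then along some subsequence $N_k\to\infty$ the measures $\rho_{N_k}$ stay $\varepsilon_0$-far from $m_{\widehat\Lambda}$ for a fixed $\varepsilon_0>0$; applying the dichotomy with $\varepsilon=\varepsilon_0$ gives fixed $q$ and $\kappa>0$ with $\tau\big(R_q^{(\eta_{N_k})}\big)\geq\kappa$ for all large $k$ and $\eta_{N_k}\to 0$. Since $R_q$ is finite, hence closed, the neighbourhoods $R_q^{(\eta_{N_k})}$ decrease to $R_q$, so by continuity from above $\tau(R_q)=\lim_k\tau\big(R_q^{(\eta_{N_k})}\big)\geq\kappa>0$, contradicting $\tau(\Rat\,\Lambda)=0$ since $R_q\subseteq\Rat\,\Lambda$. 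Hence $\rho_N\to m_{\widehat\Lambda}$ weakly. (An alternative route is to observe that every weak-$*$ limit point of $\rho_N$ is $p$-stationary and to invoke a classification of $p$-stationary measures on $\widehat\Lambda$ --- atomic part supported on $\Rat\,\Lambda$, continuous part equal to Haar --- in the spirit of Benoist--Quint stiffness; but ruling out the atomic part in the limit still seems to require an effective input of the above kind, so I would follow the Fourier route.)
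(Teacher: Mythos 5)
This statement is not proved in the paper at all: it is imported verbatim as Corollary B of Bourgain--Furman--Lindenstrauss--Mozes \cite{BLFM} and used as a black box, so there is no internal argument to compare yours against. Judged as a proof in its own right, your write-up correctly identifies the BFLM strategy (Fourier reduction to the dual walk on $\Lambda$, a quantitative ``non-equidistribution implies mass near bounded-denominator rationals'' dichotomy, then a soft limiting argument using $\tau(\Rat\ \Lambda)=0$), and the Fourier reduction and the final continuity-from-above step are fine. But the middle step, which you yourself flag as the main obstacle, is where the entire content of the BFLM paper lives, and your sketch of it does not amount to an argument: the passage from ``$|\widehat\tau(w)|$ not small for many $w=\gamma_n^\vee v$ with $\|\gamma_n^\vee v\|\to\infty$ and directions equidistributing off hyperplanes (Lemma \ref{planezero})'' to ``$\tau$ has positive mass on rational points of bounded denominator'' is exactly the multi-scale bootstrap with its delicate bookkeeping of how the admissible denominator $q(\varepsilon)$, the mass threshold $\kappa(\varepsilon)$, and the shrinking radii $\eta_N$ depend on the failure of equidistribution, and you assert it ``as in BFLM'' rather than prove it. In particular, the claim that trapping near a fixed finite rational set for a positive proportion of times forces a genuine atom of $\tau$ is precisely the nontrivial effective statement, not a consequence of the soft dynamical facts you list.

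So the honest assessment is: your proposal is a correct high-level reconstruction of the external theorem the paper cites, but it is not a proof; if this step were required to be self-contained, the quantitative dichotomy would have to be established in full (essentially reproducing Theorem A of \cite{BLFM}), whereas in the context of this paper the appropriate move is simply to cite \cite{BLFM}, Corollary B, as the authors do. Your parenthetical alternative via stationary-measure rigidity runs into exactly the difficulty you note --- weak-$^*$ limits of $\frac1N\sum_{n\le N}p^{*n}*\tau$ need not inherit the condition of assigning zero mass to $\Rat\ \Lambda$, so the atomic part of the limit cannot be excluded by softness alone --- which is why the effective input is unavoidable.
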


\subsection{Upper $(p, v)$-density}

\begin{defi}
    Let $p$ be a generating random walk on $\Gamma$. For a subset $E$ of $\Lambda$ we define the \emph{upper $(p, v)$-density} of $E$ as
    \begin{equation*}
        \overline{d}_{p, v} (E) := \limsup_N \frac{1}{N} \sum_{n=1}^N \bP(\gamma_n (v) \in E)
    \end{equation*}
\end{defi}
Note that $\overline{d}_{p, v} (E) = \overline{d}_{p, v} (E \cap \Gamma .v)$ for every $E$.

\begin{lem} \label{zerohyperplane}
    For every generating random walk $p$ on $\Gamma$ and every nonzero $v \in \Lambda$, 
    \begin{equation*}
        \overline{d}_{p, v} (L) = 0
    \end{equation*}
    for every hyperplane $L \subseteq \bR^r$.  
\end{lem}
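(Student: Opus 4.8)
The plan is to transfer the statement to the projective space $\bP(\bR^r)$ and invoke Lemma~\ref{planezero}. Fix a nonzero $v\in\Lambda$ and a hyperplane $L\subseteq\bR^r$. Since $\gamma_n\in\GL_r(\bZ)$ we have $\gamma_n(v)\neq 0$, so $\gamma_n(v)\in L$ if and only if $[\gamma_n(v)]\in[L]$, where $[L]\subseteq\bP(\bR^r)$ denotes the image of $L\setminus\{0\}$ under the projection $[\,\cdot\,]$. For each $N\in\bN$ let $\mu_N:=\frac1N\sum_{n=1}^N p^{*n}*\delta_{[v]}$ be the probability measure on the compact space $\bP(\bR^r)$, i.e.\ the average over $1\le n\le N$ of the distributions of the random points $[\gamma_n(v)]$. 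Then $\mu_N([L])=\frac1N\sum_{n=1}^N\bP(\gamma_n(v)\in L)$, and hence $\overline{d}_{p,v}(L)=\limsup_N \mu_N([L])$.

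Next I would run the standard Krylov--Bogolyubov argument to produce a $p$-stationary measure from the sequence $(\mu_N)$. Telescoping gives $p*\mu_N-\mu_N=\frac1N\bigl(p^{*(N+1)}*\delta_{[v]}-p*\delta_{[v]}\bigr)$, which has total variation at most $2/N$. Since $\bP(\bR^r)$ is compact and $\mu\mapsto p*\mu$ is weak-$*$ continuous (for $f\in C(\bP(\bR^r))$ the function $x\mapsto\int f(g.x)\,dp(g)$ is again continuous by dominated convergence), any weak-$*$ subsequential limit of $(\mu_N)$ is a $p$-stationary probability measure on $\bP(\bR^r)$. Now choose a subsequence $(N_k)$ along which $\mu_{N_k}([L])\to\overline{d}_{p,v}(L)$, and then a further subsequence along which $\mu_{N_k}\to\nu$ weakly for some $p$-stationary $\nu$.

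Finally, $[L]$ is closed in $\bP(\bR^r)$, so the portmanteau theorem yields $\overline{d}_{p,v}(L)=\lim_k\mu_{N_k}([L])\le\nu([L])$, while Lemma~\ref{planezero} gives $\nu([L])=0$. Since $\overline{d}_{p,v}(L)\ge 0$ trivially, this forces $\overline{d}_{p,v}(L)=0$, as claimed. There is no genuine obstacle here: the whole substance of the statement is packaged in Lemma~\ref{planezero}, and the only mild care needed is the simultaneous extraction of a subsequence realizing both the $\limsup$ and a weak-$*$ limit, together with the observation that $[L]$ is closed (so that the portmanteau inequality points in the right direction and mass cannot fail to escape it in the limit).
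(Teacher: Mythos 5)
Your proof is correct and follows essentially the same route as the paper's: pass to $\bP(\bR^r)$, form the Ces\`aro averages $\frac{1}{N}\sum_{n=1}^N p^{*n}*\delta_{[v]}$, observe that any weak-$*$ subsequential limit is $p$-stationary and that $[L]$ is closed so the portmanteau inequality applies, then invoke Lemma~\ref{planezero}. You spell out the Krylov--Bogolyubov telescoping and the double subsequence extraction, which the paper leaves implicit, but the argument is the same.
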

\begin{proof}
    Any weak$^*$ - convergent subsequence of 
    \begin{equation*}
        \frac{1}{N} \sum_{n = 1}^N p^{*n} * \delta_{[v]} 
    \end{equation*}
    has a $p$-stationary measure $\nu$ as its limit. Since $[L]\subseteq \bP (\bR ^r )$ is a closed subset, for the limit along a convergent subsequence indexed by $k$ we get 
    \begin{equation*}
        \lim_{k \to \infty} \frac{1}{N_k} \sum_{n = 1}^{N_k} p^{*n} * \delta_{[v]} ([L]) \leq \nu ([L]) = 0.
    \end{equation*}
    By Lemma \ref{planezero} it follows that
    \begin{equation*}
        \begin{split}
            \overline{d}_{p, v} (L) = \limsup_{N \to \infty} \frac{1}{N} \sum_{n = 1}^N \bP (\gamma_n (v) \in L) \\ 
            = \limsup_{N \to \infty} \frac{1}{N} \sum_{n = 1}^N \bP (\gamma_n .[v] \in [L]) \\
            = \limsup_{N \to \infty} \frac{1}{N} \sum_{n = 1}^N p^{*n} * \delta_{[v]} ([L]) = 0. 
        \end{split}
    \end{equation*}
\end{proof}

Our goal now is to show that  
\begin{equation*}
    \overline{d}_{p, u} (\{v \in \Lambda: \mu(B \cap v.B) > c \} \cap \Gamma .v_0 ) > 0; 
\end{equation*}
in light of Lemma \ref{zerohyperplane} this will imply that the set in question is not contained in a finite union of hyperplanes. 

\begin{lem} \label{Ebig}
    For any generating random walk $p$ on $\Gamma$ and any  $v \in \Lambda \setminus \{ 0\}$, 
    \begin{equation*}
        \limsup_N |\frac{1}{N} \sum_{n=1}^N \bE[\mu(B \cap \gamma_n (v).B)] -\mu(B)^2 |\leq \sigma_B (\mbox{Rat }\Lambda \setminus \{0\} ). 
    \end{equation*}
\end{lem}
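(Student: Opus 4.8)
The plan is to use the spectral measure representation from Definition \ref{spectraldef} together with the equidistribution result Theorem \ref{BLFM}. We start from
\[
\mu(B \cap \gamma_n(v).B) = \int_{\widehat{\Lambda}} \langle \xi, \gamma_n(v)\rangle \, d\sigma_B(\xi),
\]
and Fubini allows us to write the Ces\`aro average as
\[
\frac{1}{N}\sum_{n=1}^N \bE\big[\mu(B \cap \gamma_n(v).B)\big] = \int_{\widehat{\Lambda}} \frac{1}{N}\sum_{n=1}^N \bE\big[\langle \xi, \gamma_n(v)\rangle\big] \, d\sigma_B(\xi).
\]
Now decompose $\sigma_B$ as in \eqref{spectraldecomp}: $\sigma_B = \mu(B)^2 \delta_0 + \sigma_B^{\Rat} + \tau$, where $\tau(\Rat\, \Lambda) = 0$. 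The $\delta_0$-term contributes exactly $\mu(B)^2$ (since $\langle 0, \gamma_n(v)\rangle = 1$), and the $\sigma_B^{\Rat}$-term contributes something of absolute value at most $\sigma_B^{\Rat}(\widehat{\Lambda}) = \sigma_B(\Rat\, \Lambda \setminus \{0\})$. So the claim reduces to showing that the contribution of $\tau$ vanishes in the limit.

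For the $\tau$-term, I would rewrite $\bE[\langle \xi, \gamma_n(v)\rangle]$ by transferring the random walk onto the dual: $\bE[\langle \xi, \gamma_n(v)\rangle] = \bE[\langle \gamma_n^{-1}.\xi, v\rangle]$ (using the natural $\Gamma$-action on $\widehat{\Lambda}$ dual to the one on $\Lambda$; one has to be slightly careful about whether it is $\gamma_n$ or its transpose inverse, but either way $\Gamma$ is replaced by a conjugate/transpose group which is still strongly irreducible with a proximal element, so Theorem \ref{BLFM} still applies). Integrating against $\tau$ and using Fubini again,
\[
\frac{1}{N}\sum_{n=1}^N \int_{\widehat{\Lambda}} \bE\big[\langle \gamma_n^{-1}.\xi, v\rangle\big]\, d\tau(\xi) = \int_{\widehat{\Lambda}} \langle \eta, v\rangle \, d\Big(\frac{1}{N}\sum_{n=1}^N p^{*n} * \tau\Big)(\eta).
\]
Since $\eta \mapsto \langle \eta, v\rangle$ is continuous and bounded on $\widehat{\Lambda}$, Theorem \ref{BLFM} gives that this converges to $\int_{\widehat{\Lambda}} \langle \eta, v\rangle\, dm_{\widehat{\Lambda}}(\eta)$, which equals $0$ because $v \neq 0$ (a nontrivial character integrates to zero against Haar measure). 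Combining the three pieces yields
\[
\lim_{N}\Big|\frac{1}{N}\sum_{n=1}^N \bE\big[\mu(B \cap \gamma_n(v).B)\big] - \mu(B)^2\Big| = \Big|\int \langle\xi,\cdot\rangle\text{-average of }\sigma_B^{\Rat}\Big| \leq \sigma_B(\Rat\, \Lambda \setminus \{0\}),
\]
which is the assertion (and in fact one could take a $\limsup$ rather than $\lim$ if the $\sigma_B^{\Rat}$ term does not converge, since that term is uniformly bounded by $\sigma_B(\Rat\,\Lambda\setminus\{0\})$).

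The main obstacle I anticipate is purely bookkeeping around the dual action: making sure that the random walk $p$ on $\Gamma$, when transported to act on $\widehat{\Lambda}$, is still a generating random walk on a group satisfying the hypotheses of Theorem \ref{BLFM} (strong irreducibility and existence of a proximal element are preserved under transpose-inverse, and the pushforward of $p$ under $g \mapsto (g^{-1})^t$ or $g\mapsto g^{-1}$ is still a probability measure whose support generates the corresponding group). A secondary, more technical point is justifying the two applications of Fubini, which is routine since $\sigma_B$ and $\tau$ are finite measures and the integrand $\langle \xi, \gamma_n(v)\rangle$ is bounded by $1$ in modulus; and one should note the hypotheses of Lemma \ref{haystack}, under which this lemma is invoked, indeed supply a \emph{generating} random walk (or one chooses $p$ with support generating $\Gamma$, which is possible since $\Gamma$ is finitely generated).
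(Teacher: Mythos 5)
Your proposal is correct and follows essentially the same route as the paper: represent $\mu(B\cap\gamma_n(v).B)$ via the spectral measure, decompose $\sigma_B = \mu(B)^2\delta_0 + \sigma_B^{\Rat} + \tau$ as in \eqref{spectraldecomp}, observe that $\delta_0$ contributes exactly $\mu(B)^2$, bound the $\sigma_B^{\Rat}$ contribution by its total mass, and kill the $\tau$ contribution in the Ces\`aro limit using Theorem~\ref{BLFM} together with the fact that a nontrivial character integrates to zero against Haar measure. The only cosmetic difference is that you flag the transpose/inverse bookkeeping in passing to the dual action (and correctly note it is harmless), whereas the paper suppresses this by writing $\gamma_n^*$; your closing remark that the conclusion should be a $\limsup$ rather than a $\lim$, since the $\sigma_B^{\Rat}$ term need not converge, is exactly right and matches the statement.
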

\begin{proof}
    By Bochner's theorem we can write
    \begin{equation*}
        \begin{split}
            \mu(B \cap \gamma_n(v) . B) = \int_{\widehat{\Lambda}} \langle \xi, \gamma_n (v)\rangle d \sigma_B (\xi) \\
            = \int_{\widehat{\Lambda}} \langle \gamma_n ^* (\xi), v\rangle d \sigma_B (\xi) = \int_{\widehat{\Lambda}} \langle \xi,v\rangle d (\gamma_{n*} ^* \sigma_B) (\xi).
        \end{split}
    \end{equation*}
    It follows that
    \begin{equation*}
        \bE[\mu(B \cap \gamma_n (v).B)] = \int_{\widehat{\Lambda}} \langle \xi,v\rangle d (p ^{*n} * \sigma_B) (\xi)
    \end{equation*}
    and
    \begin{equation*}
        \frac{1}{N} \sum_{n=1}^N \bE[\mu(B \cap \gamma_n (v).B)] = \int_{\widehat{\Lambda}} \langle \xi,v\rangle d ( \frac{1}{N} \sum_{n=1}^N p ^{*n} * \sigma_B) (\xi).
    \end{equation*}
    We now write
    \begin{equation*}
        \sigma_B = \mu(B) ^2 \delta_0 + \sigma_B ^{Rat} + \tau, 
    \end{equation*}
    where $\sigma_B ^{Rat}$ is supported on $\mbox{Rat }\Lambda \setminus \{0\}$ and $\tau(\mbox{Rat }\Lambda ) = 0$. 
    It follows from Theorem \ref{BLFM} that $\frac{1}{N}\sum_{n=1}^N p^n * \tau \to m_{\widehat{\Lambda}}$ in the weak$^*$ -topology. Also  $p *  \delta_0 = \delta_0$ and $\sigma_B ^{Rat} (\widehat{\Lambda}) = \sigma_B (\mbox{Rat} \mbox{ } \Lambda \setminus \{0\})$. Therefore 
    \begin{equation*}
        \begin{split}
            |\frac{1}{N} \sum_{n=1}^N \bE[\mu(B \cap \gamma_n (v).B)] -\mu(B)^2 | \leq |\int_{\widehat{\Lambda}}\langle \xi, v \rangle d ( \frac{1}{N} \sum_{n=1}^Np^{*n} * \sigma_B ^{Rat}) (\xi)| \\ + |\int_{\widehat{\Lambda}}\langle \xi, v \rangle d ( \frac{1}{N} \sum_{n=1}^N p^{*n} * \tau ) (\xi)| \leq \sigma_B (Rat \mbox{ }\Lambda \setminus \{0\} ) + |\int_{\widehat{\Lambda}}\langle \xi, v \rangle d ( \frac{1}{N} \sum_{n=1}^N p^{*n} * \tau ) (\xi)|.
        \end{split}
    \end{equation*}
    Since $\int_{\widehat{\Lambda}}\langle \xi, v \rangle d ( \frac{1}{N} \sum_{n=1}^N p^{*n} * \tau ) (\xi) \to \int_{\widehat{\Lambda}} \langle \xi, v \rangle dm_{\widehat{\Lambda}}(\xi) = 0$ for nontrivial $v$, the lemma follows.  
\end{proof}
Lemma \ref{Ebig} allows us to estimate $\bE [\mu (B \cap \gamma_n (v) .B)]$ from below. To obtain an estimate of $$\bP(\mu(B \cap \gamma_n (v).B) > c )$$ from this, we use the following Markov-type inequality. 
\begin{lem} \label{ineq}
    Let $f$ be a nonnegative bounded random variable. Then if $0 < c < \bE[f]$, we have that
    \begin{equation*}
        \bP( f > c) \geq \frac{\bE[f]-c}{\|f\|_{\infty} - c}. 
    \end{equation*}
\end{lem}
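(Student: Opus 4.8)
The plan is to run the one-line argument behind the standard Markov/Paley--Zygmund-type estimate: split the expectation of $f$ according to whether $f$ exceeds the threshold $c$, bound each piece by the crudest available constant, and solve the resulting linear inequality for $\mathbb{P}(f>c)$.

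Concretely, I would first record that $\|f\|_\infty - c > 0$: since $f$ is bounded we have $\|f\|_\infty \geq \mathbb{E}[f]$, and $\mathbb{E}[f] > c$ by hypothesis, so the denominator in the claimed inequality is strictly positive and dividing by it is legitimate. Next, write
\begin{equation*}
    \mathbb{E}[f] = \mathbb{E}[f \mathbf{1}_{\{f > c\}}] + \mathbb{E}[f \mathbf{1}_{\{f \le c\}}] \le \|f\|_\infty \, \mathbb{P}(f > c) + c\,\mathbb{P}(f \le c),
\end{equation*}
using $f \le \|f\|_\infty$ on the first event (and $f \ge 0$ to drop nothing harmful) and $f \le c$ on the second. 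Substituting $\mathbb{P}(f \le c) = 1 - \mathbb{P}(f>c)$ gives
\begin{equation*}
    \mathbb{E}[f] \le (\|f\|_\infty - c)\,\mathbb{P}(f > c) + c,
\end{equation*}
hence $\mathbb{E}[f] - c \le (\|f\|_\infty - c)\,\mathbb{P}(f>c)$, and dividing by $\|f\|_\infty - c > 0$ yields the assertion.

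There is no real obstacle here; the only point requiring a moment's care is checking that the denominator $\|f\|_\infty - c$ is positive so that the division preserves the inequality, which follows immediately from $\|f\|_\infty \ge \mathbb{E}[f] > c$. (One should also note the edge case $\|f\|_\infty = \mathbb{E}[f]$, forcing $f = \|f\|_\infty$ a.s., in which the right-hand side equals $1$ and the inequality holds trivially; but this is subsumed by the computation above once positivity of the denominator is known.)
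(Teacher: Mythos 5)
Your proof is correct and follows essentially the same route as the paper: decompose $\bE[f]$ over $\{f>c\}$ and $\{f\le c\}$, bound by $\|f\|_\infty$ and $c$ respectively, and solve the resulting linear inequality. The only addition is your explicit check that $\|f\|_\infty - c > 0$, which the paper leaves implicit.
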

\begin{proof}
    We compute
    \begin{equation*}
        \begin{split}
            \bE[f] = \bE[f \chi_{f>c}] + \bE[f \chi_{f \leq c}] \leq \|f\|_\infty \bP(f > c) + c (1 - \bP(f> c)) \\
            = (\|f\|_\infty -c) \bP(f > c) + c
        \end{split}
    \end{equation*}
    which is equivalent to the claim.
\end{proof}
For the final arguments, we will make use of the \emph{asymptotic lower density} of a set $E \subseteq \bN$, defined as 
\begin{equation*}
    \underline{d} (E) := \liminf_{N}  \frac{|E \cap [0, N-1]|}{N}. 
\end{equation*}

\begin{lem} \label{Cdelta}
    Let $0 < c < \mu(B)^2 - \sigma_B (\Rat \mbox{ } \Lambda \setminus\{0\})$. Then there is $\delta > 0$ such that
    \begin{equation*}
        C := \{n \in \bN: \bP(\mu(B \cap \gamma_n (v).B) > c) > \delta \}
    \end{equation*}
    satisfies
    \begin{equation*}
        \underline{d} (C) > 0. 
    \end{equation*}
\end{lem}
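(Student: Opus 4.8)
The plan is to combine Lemma \ref{Ebig}, the Markov-type inequality of Lemma \ref{ineq}, and a routine averaging argument to convert a positive-density lower bound on the Cesàro averages of $\bE[\mu(B\cap\gamma_n(v).B)]$ into a positive-density lower bound on the set of $n$ for which $\bP(\mu(B\cap\gamma_n(v).B)>c)$ is bounded below. First I would set $f_n := \mu(B\cap\gamma_n(v).B)$, so that $0 \le f_n \le \mu(B) \le 1$ for all $n$. Fix $c$ with $0<c<\mu(B)^2-\sigma_B(\Rat\ \Lambda\setminus\{0\})$ and choose some intermediate value $c'$ with $c < c' < \mu(B)^2 - \sigma_B(\Rat\ \Lambda\setminus\{0\})$; set $a := \mu(B)^2 - \sigma_B(\Rat\ \Lambda\setminus\{0\}) - c' > 0$. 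By Lemma \ref{Ebig} there is $N_0$ such that for all $N \ge N_0$,
\begin{equation*}
\frac{1}{N}\sum_{n=1}^N \bE[f_n] \;\ge\; \mu(B)^2 - \sigma_B(\Rat\ \Lambda\setminus\{0\}) - \tfrac{a}{2} \;=\; c' + \tfrac{a}{2}.
\end{equation*}

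Next I would apply Lemma \ref{ineq} pointwise in $n$: since $\|f_n\|_\infty \le 1$ and $c < c' \le \bE[f_n]$ whenever $\bE[f_n] \ge c'$, for such $n$ we get $\bP(f_n > c) \ge (\bE[f_n]-c)/(1-c) \ge (c'-c)/(1-c) =: \delta_0 > 0$; for the remaining $n$ we only have $\bE[f_n] \le \|f_n\|_\infty \cdot 1 \le \mu(B) \le 1$ as a trivial bound. The idea is then a first-moment (pigeonhole) argument: let $G := \{n : \bE[f_n] \ge c'\}$. Splitting the Cesàro sum over $G$ and its complement and using $\bE[f_n] \le \mu(B)^2$ on $G$ (crude bound $\le 1$ suffices) and $\bE[f_n] < c'$ off $G$, one obtains, for $N \ge N_0$,
\begin{equation*}
c' + \tfrac{a}{2} \;\le\; \frac{1}{N}\sum_{n=1}^N \bE[f_n] \;\le\; \frac{|G\cap[1,N]|}{N}\cdot 1 \;+\; \Bigl(1 - \frac{|G\cap[1,N]|}{N}\Bigr) c',
\end{equation*}
which rearranges to $|G\cap[1,N]|/N \ge (a/2)/(1-c')$ for all $N \ge N_0$. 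Hence $\underline{d}(G) \ge (a/2)/(1-c') > 0$. Since $G \subseteq C$ with $\delta := \delta_0$ (as $\bP(f_n>c) \ge \delta_0$ for every $n \in G$), we conclude $\underline{d}(C) \ge \underline{d}(G) > 0$, which is exactly the claim.

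I do not anticipate a genuine obstacle here; the lemma is essentially a packaging of the preceding two lemmas. The one point requiring minor care is the direction of the averaging inequality — one needs a \emph{lower} bound on $\underline{d}(C)$, and the Cesàro average being large forces the \emph{good} set to have positive lower density (not merely positive upper density), which is why I keep a fixed margin $a/2$ uniformly for all large $N$ rather than passing to a subsequence. A second small subtlety is that Lemma \ref{Ebig} is stated with a $\limsup$ and an absolute value, so strictly I should note that it yields $\liminf_N \frac1N\sum_{n=1}^N\bE[f_n] \ge \mu(B)^2 - \sigma_B(\Rat\ \Lambda\setminus\{0\})$, which is what the argument above uses; this follows immediately by dropping the absolute value on the appropriate side.
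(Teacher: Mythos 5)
Your proof is correct and follows essentially the same approach as the paper: pick an intermediate $c'$, show via Lemma \ref{Ebig} that $G := \{n : \bE[\mu(B\cap\gamma_n(v).B)] \ge c'\}$ has positive lower density, and use Lemma \ref{ineq} to show $G \subseteq C$ for a suitable $\delta$. The paper leaves the positive-lower-density step as an unspoken consequence of Lemma \ref{Ebig} (you correctly fill in the averaging/pigeonhole argument, including the needed passage from $\limsup|\cdot|$ to a $\liminf$ lower bound) and uses the sharper bound $\|f_n\|_\infty \le \mu(B)$ so that $\delta = (c'-c)/(\mu(B)-c)$, but your cruder $\delta = (c'-c)/(1-c)$ works just as well.
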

\begin{proof}
    Let $0 < c < c' < \mu(B)^2 - \sigma_B (\Rat \mbox{ } \Lambda \setminus\{0\})$ and put
    \begin{equation*}
        A := \{n \in \bN: \bE[\mu (B \cap \gamma_n (v)B)] > c' \}. 
    \end{equation*}
    It follows from Lemma \ref{Ebig} that $\underline{d} (A) > 0$, and for any $n \in A$, using the inequality in Lemma \ref{ineq} applied to $f = \mu(B \cap \gamma_n (v) .B)$ (note here that $\|f\|_\infty \leq \mu(B)$), we obtain
    \begin{equation*}
        \begin{split}
            \bP(\mu(B \cap \gamma_n (v).B) > c)  \geq \frac{\bE[\mu(B \cap \gamma_n (v). B)] - c}{\mu(B) - c} \\
            \geq \frac{c' - c}{\mu(B) - c} > 0. 
        \end{split} 
    \end{equation*}
    Thus with $\delta := \frac{c' - c}{\mu(B) - c}$, $A \subseteq C$ and the claim is proved. 
\end{proof}

\begin{proof}[Proof of Lemma \ref{haystack}] Fix $v_0 \in \Lambda$.
    Put
    \begin{equation*}
        E := \{v \in \Lambda: \mu (B \cap v. B) > c \}
    \end{equation*}
    and fix a generating random walk $p$ on $\Gamma$. If $E$ were contained in a finite union $\bigcup_{k=1}^K L_k$ of hyperplanes $L_k \subseteq \bR ^r$, by Lemma \ref{haystackhyperplanes} we would get
    \begin{equation*}
        \overline{d}_{p, u} (E) \leq \sum_{k=1}^K \overline{d}_{p,u} (L_k) = 0.
    \end{equation*}
    Therefore, the Lemma follows if we can show that $\overline{d}_{p, v_0} (E) = \overline{d}_{p,v_0} (E \cap \Gamma.v_0) > 0$. We estimate
    \begin{equation*}
        \begin{split}
            \overline{d}_{p, v_0} (E) = \limsup_N \frac{1}{N} \sum_{n=1}^N \bP(\gamma_n (v_0) \in E) \\
        = \limsup_N \frac{1}{N} \sum_{n=1}^N \bP(\mu (B \cap \gamma_n (v_0) . B) > c ) \\
         \geq \limsup_N \frac{1}{N} \sum_{n=1}^N \delta \cdot \chi_C(n)
        \geq \delta \cdot\underline{d} (C) > 0, 
        \end{split}
    \end{equation*}
    where $C$ and $\delta$ are as in Lemma \ref{Cdelta}. 
\end{proof}

\bibliographystyle{amsplain}

\end{document}